\definecolor{blackgreen}{RGB}{0,80,0}
\newcommand{\N}{\mathbb{N}}
\newcommand{\R}{\mathbb{R}}
\newcommand{\T}{\mathbb{T}}
\newcommand{\Z}{\mathbb{Z}}
\newcommand{\supp}{\operatorname{supp}}
\numberwithin{equation}{section}
\newtheorem{thm}{Theorem}[section]
\newtheorem{lemma}[thm]{Lemma}
\newtheorem{proposition}{Proposition}[section]
\newtheorem{theorem}{Theorem}[section]
\newtheorem{remark}{Remark}[section]
\numberwithin{equation}{section}
\title{A note on the well-posedness in the energy space for the
generalized ZK equation posed on $\mathbb{R}\times\mathbb{T}$}
\author{ Luiz Gustavo Farah and Luc Molinet } % Autor
\date{} % Data
\begin{document}
\maketitle
	
\begin{abstract}\noindent
In this note, we prove the local well-posedness in the energy space of the $k$-generalized Zakharov-Kuznetsov equation posed on $ \R\times \T $  for any power non-linearity $ k\ge 2$. Moreover, we obtain  global solutions under a precise smallness assumption on the initial data by proving a sharp Gagliardo Nirenberg type inequality.
\end{abstract}

\section{Introduction}
We consider the generalized Zakharov--Kuznetsov (gZK) equation with any power non-linearity
\begin{equation}\label{ZKk}
u_t +\partial_x ( u^{k+1}) + u_{xxx} + u_{xyy} = 0, \quad (x,y)\in \R\times \T,
\end{equation}
where $ u=u(t,x,y) $ is a real-valued function, $ (t,x,y)\in \R\times\R\times \T $ with  $\T=\R/\Z$,  and where $k\geq 2$.  In the case $ k=1$, this equation was derived by Zakharov and Kuznetsov in \cite{ZK74} to describe the propagation of ionic-acoustic waves in magnetized plasma when  weak transverse effects occur in dimension three. The derivation of ZK from the Euler-Poisson system with magnetic field was performed by Lannes, Linares and Saut \cite{LLS13}.  More generally, the  gZK is a natural two-dimensional extension of the generalized Korteweg-de Vries (gKdV) equation in the case of nearly one dimensional propagation. 

The gZK equation has an Hamiltonian structure. In particular, it is well-known that the two
following quantities are conserved for smooth solutions
\begin{equation} \label{M}
M(u)=\int u ^2dxdy,
\end{equation}
and
\begin{equation} \label{H}
H(u)= \int \left( \frac12|\nabla u|^2-\frac1{k+2}u^{k+2}\right)dxdy.
\end{equation}

A particular solution of the form $u(t,x,y)=u_c(x-ct,y)$ is called a traveling wave with speed $c$. For the gZK equation, we are interested in a non-trivial solution $u_c$ to the following stationary equation,
\begin{equation}
\label{TW}
- \partial_{xx} u_c - \partial_{yy} u_c + c u_c
- u_c^{3} = 0, \quad (x,y)\in \R\times \T. 
\end{equation}
The question of stability and instability of this traveling waves is quite interesting. In the $\mathbb{R}^2$ case, the orbital stability was first studied by de Bouard \cite{Bou96} showing that positive and radially symmetric traveling waves are orbitally stable for $k < 2$ and unstable for $k > 2$. More recently, the critical case $k = 2$ was considered by Holmer, Roudenko and the second author \cite{FHR19} showing the orbital instability (see also \cite{FHR19II} for an alternative proof of instability in the supercritical gZK case $k>2$). The more refined asymptotic stability was obtained by C\^ote, Mu\~noz, Pilod and Simpson \cite{CMP16} for $1\leq k<k_0\approx 1.15$, adapting the ideas introduced by Martel and Merle \cite{MM01}, \cite{MM05} and \cite{MM08} for the gKdV equation to a multidimensional model. The special case where the traveling wave is independent of the second variable, that is $u(t,x,y)=Q_c(x-ct)$, is commonly refereed as a line soliton of \eqref{ZKk} and, in this case, $Q_c$ is the soliton of speed $c > 0$ to the gKdV equation. A natural question is the stability of such solutions with respect to perturbations which are periodic in the transversal direction. In the $\R^2$ case with $k=1$, the transverse non-linear instability for line soliton was obtained by Rousset and Tzvetkov \cite{RT08}. In the $\mathbb{R} \times \mathbb{T}$ case with $k=1$, the linear instability was showed by Bridges \cite{Br00} for sufficiently large speeds. Later, Yamazaki \cite{YY17} studied the non-linear orbital stability and  transverse instability applying the Evans' function method by Pego and Weinstein \cite{PW92} and the ideas introduced by Rousset and Tzvetkov \cite{RT09} to deduce a full description according to the speed of the line soliton. 

When posed on  $\R^2$, the local wel-posedness of the gZK equation for has been studied in many papers. Let us quote \cite{BL01}, \cite{Fa95}, \cite{FLP12}, \cite{G15}, \cite{GH14}, \cite{LP09}, \cite{LP11}, \cite{RV12} and more recently Kinoshita \cite{K21} and \cite{K22} where local well-posedness in $ H^{-1/4+}(\R^2) $ for $k=1$ and $ H^{1/4}(\R^2) $ for $k=2$ are proved. On the other hand, the case $ \R\times\T $ has been less studied. Let us mention the work of Pilod and the first author \cite{MP15} where the well-posedness of the ZK equation in the energy space $ H^1(\R\times\T) $ for $k=1$ was proved. Our main goal here is to address the local and global theory in $\R\times \T$ for $k\geq 2$.

%{\color{red} References from the first version of the manuscript: \cite{PD18}, \cite{PW94}, \cite{ML01}, \cite{MT01}, \cite{M01}, \cite{MMR15}, \cite{MMR14}, \cite{MM02}, \cite{LL58}, \cite{BSS87}, \cite{B72}}.

Our first main result in this work is the local well-posedness in the Sobolev type spaces $H^s(\mathbb R\times \T)$,  in particular in the energy space $ H^1(\R\times\T) $, of the gZK equation \eqref{ZKk}. %Note that we are only interested in the case $ k\ge 2 $ since the case $ k=1$ (that corresponds to the  Zakharov--Kuznetsov equation) is already treated in \cite{MP15}.
 To do  this we will perform a fixed point theorem in Bourgain's type spaces associated to the linear ZK equation and establish a new $L^4$-Strichartz estimate in this context.

\begin{theorem} \label{LWP}
There exists a function $ s_0 : \Z_+ \setminus\{0,1\} \to [5/6,1[ $, with 
 $s_0(2)=5/6 $, such that for  any $ k\ge 2 $ , any $s>s_0(k)$ 
 and  any $u_0 \in H^s(\mathbb R\times \T)$, there exists $T=T(\|u_0\|_{H^{s_0(k)+}})>0$ and a unique solution of \eqref{ZKk} such that $u(\cdot,0)=u_0$ and 
\begin{equation} \label{theoR2.1}
u \in C([0,T]:H^s(\mathbb R\times\T)) \cap X_T^{s,\frac12+} \ .
\end{equation}
Moreover, for any $T' \in (0,T)$, there exists a neighborhood $\mathcal{U}$ of $u_0$ in $H^s(\mathbb R\times\T)$, such that the flow map data-solution 
\begin{equation} \label{theoR2.2}
S: v_0 \in \mathcal{U} \mapsto v \in  C([0,T']:H^s(\mathbb R\times\T)) \cap X_{T'}^{s,\frac12+}
\end{equation}
is smooth.
\end{theorem}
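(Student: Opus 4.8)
**The plan is to establish local well-posedness by a fixed-point argument in Bourgain spaces $X_T^{s,b}$ adapted to the linear ZK flow.** The natural approach is to recast \eqref{ZKk} in Duhamel form,
\begin{equation}
u(t) = W(t)u_0 - \int_0^t W(t-t')\,\partial_x\big(u^{k+1}(t')\big)\,dt',
\end{equation}
where $W(t)=e^{-t(\partial_{xxx}+\partial_{xyy})}$ is the linear group, and to seek a fixed point of the associated operator in a suitable ball of $X_T^{s,1/2+}$. The classical machinery (following Bourgain and the refinements of Kenig--Ponce--Vega) reduces the whole theorem to a single multilinear estimate of the form
\begin{equation}
\Big\| \partial_x\big(\textstyle\prod_{j=1}^{k+1} u_j\big) \Big\|_{X_T^{s,-1/2+}} \lesssim \prod_{j=1}^{k+1}\|u_j\|_{X_T^{s,1/2+}},
\end{equation}
together with the standard linear and Duhamel estimates in $X^{s,b}$ spaces, which are routine. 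Once such a nonlinear estimate is in hand, the contraction-mapping theorem yields existence, uniqueness, persistence of regularity \eqref{theoR2.1}, and—because the map is a smooth (indeed polynomial plus Duhamel) function of the data—the real-analyticity/smoothness of the flow map \eqref{theoR2.2}; the latter is a formal consequence of the implicit function theorem applied to the fixed-point equation, so I would treat it only briefly.

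\emph{The heart of the matter, and the main obstacle, is the multilinear estimate.} My strategy here would be to dualize and use a Fourier restriction/$L^4$-Strichartz approach: the key analytic input is the new $L^4$-Strichartz estimate for the ZK group on $\R\times\T$ announced in the introduction, of the schematic shape $\|W(t)\phi\|_{L^4_{t,x,y}}\lesssim \|\phi\|_{H^{\alpha}}$ for an appropriate loss $\alpha\ge 0$. Interpolating this with the trivial $L^2$ bound and transferring to $X^{s,b}$ spaces via the standard principle $\|u\|_{L^4}\lesssim \|u\|_{X^{0+,1/2+}}$ would let me control products of two factors in $L^4\cdot L^4\subset L^2$. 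For the full $(k+1)$-linear expression I would place two factors in $L^4$ (absorbing the derivative $\partial_x$ via the $b$-index and the high-frequency factor) and the remaining $k-1$ factors in $L^\infty$, estimated by Sobolev embedding $H^{s}\hookrightarrow L^\infty$ on $\R\times\T$ for $s>1$, or more efficiently in mixed Strichartz norms. Keeping careful track of where the derivative lands—i.e.\ performing a Littlewood--Paley decomposition and summing over the frequency-interaction regimes (high-high-low, high-low-high, etc.)—is what forces the threshold $s>s_0(k)$, with the anisotropy between the $x$ and the periodic $y$ directions dictating the precise value, and explaining why $s_0(2)=5/6$ while $s_0(k)\nearrow 1$ as $k$ grows.

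\textbf{The technical crux I expect to fight hardest with is the mixed continuous/periodic geometry in the $L^4$-Strichartz bound.} On $\R^2$ one has a clean scaling-invariant Strichartz estimate, but on $\R\times\T$ the periodic direction destroys exact scaling and introduces number-theoretic/counting issues in the resonance function
\begin{equation}
\Omega = \sum_{j}\big(\xi_j^3+\xi_j\eta_j^2\big) - \Big(\sum_j \xi_j\Big)^3 - \Big(\sum_j\xi_j\Big)\Big(\sum_j\eta_j\Big)^2,
\end{equation}
where $\eta_j\in\Z$ are the periodic frequencies. Controlling the number of lattice points on the relevant level sets of $\Omega$, and doing so uniformly enough to avoid any logarithmic loss that would push the threshold above $1$, is the delicate step. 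I would handle it by fixing the output frequency, reducing to a divisor-counting/bilinear estimate in the two highest frequencies, and exploiting the dispersion in the $\R$-direction (where genuine scaling survives) to compensate for the weaker $\T$-direction. Assuming this $L^4$ estimate is established, the remaining steps are standard and I would not belabor them.
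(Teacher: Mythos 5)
Your overall architecture coincides with the paper's: Duhamel formulation, contraction in $X^{s,1/2+}$, reduction to a $(k+1)$-linear estimate with $\partial_x$, and, as the new analytic input, an $L^4$-Strichartz estimate on $\R\times\T$ proved by a Cauchy--Schwarz/lattice-counting argument on level sets of the resonance function (the paper's Proposition \ref{prop31} gives $\|u\|_{L^4_{txy}}\lesssim\|u\|_{X^{1/6,3/8}}$, with the counting split according to whether $\xi\lessgtr (N_1\wedge N_2)^{2/3}$). Up to that point your plan matches the paper.

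There is, however, a genuine gap in how you propose to close the multilinear estimate. Placing two (or even three) factors in $L^4$ and the remaining ones in $L^\infty$ cannot handle the high--low interactions, i.e.\ the regime $N\sim N_1\gg N_2\ge\dots\ge N_{k+1}$ where the output derivative $N^{1+s}$ must be paid against a single high-frequency factor. The $L^4$ estimate \emph{costs} $N_i^{1/6}$ per factor rather than gaining anything, so this regime produces an uncompensated growth of order $N^{1+1/3}$ after cancelling $N_1^{-s}$, and the phrase ``absorbing the derivative via the $b$-index'' is not substantiated by anything in your plan. The paper closes this case with a second, independent ingredient that you do not mention: the bilinear smoothing estimate of Molinet--Pilod (Proposition \ref{prop32}), $\|(P_{N_1}u_1)(P_{N_2}u_2)\|_{L^2}\lesssim (N_1\wedge N_2)^{1/2+}(N_1\vee N_2)^{-1+}\|P_{N_1}u_1\|_{X^{0,1/2-}}\|P_{N_2}u_2\|_{X^{0,1/2-}}$ for separated frequencies, applied twice (once pairing $u_1$ with a low factor, once pairing the dual function $P_Nv$ with another low factor) to gain the factor $N^{-2+}$ that kills the derivative. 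Without this (or an equivalent modulation/resonance argument exploiting $\max_j\langle\sigma_j\rangle\gtrsim |H|$, which you would then have to carry out), the contraction does not close for any $s<7/6$, let alone at the threshold $s_0(2)=5/6$. A secondary point: to get $T=T(\|u_0\|_{H^{s_0(k)+}})$ as stated, the multilinear estimate must be tame (one factor at regularity $s$, the others at fixed lower regularities, as in the paper's Proposition \ref{progZK}), not the symmetric form $\prod_j\|u_j\|_{X^{s,1/2+}}$ you wrote.
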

Another very interesting question is to derive an optimal bound to get the global existence of solutions. Such bound has been now derived by \cite{KM06}, \cite{HR08}, \cite{DHR08}, \cite{FLP11}, \cite{FLP12}, \citep{FLP14} for several dispersive equations posed on $\R^n $ or $ \R $ as the non-linear Schr\"odinger equation, the generalized KdV equations, the generalized ZK equation and the $k$-dispersion generalized Benjamin-Ono equation. In all these works the optimal bound is obtained by finding the solutions of a variational problem (see, for instance, Weinstein \cite{W83} and Angulo, Bona, Linares and Scialom \cite{ABLS02}). However, the problem on $ \R\times\T $ seems to be more involved, since we need a new sharp Gagliardo-Nirenberg inequality adapted to this setting. The first step in this direction is the following estimate in the spirit of Hebey and Vaugon \cite{HV95} (we refer the reader to the works of  Yu, Yue and Zhao \cite{YYZ21} and Luo \cite{LY21} for related results). 
%The global existence in $ H^1(\R\times\T) $, of the solutions to \eqref{ZKk} for small enough initial data is a consequence of the conservation laws \eqref{M}-\eqref{H}
\begin{lemma}\label{LemGN}
Let $f\in H^1(\mathbb{R}\times\mathbb{T})$ and $k\geq 1$, then there exists a universal constant $C_{k,\mathbb{T}}>0$ such that
\begin{equation}\label{SGN}
\|f\|^{k+2}_{k+2}\leq C_{k,\mathbb{R}} \|f\|^2_2\left(\|\nabla f\|^2_2+C_{k,\mathbb{T}}\|f\|^2_2\right)^{k/2},
\end{equation}
where $C_{k,\mathbb{R}}$ is the best constant for the inequality
\begin{equation}\label{GN}
\|V\|^{k+2}_{k+2}\leq C_{k,\mathbb{R}} \|V\|^2_2 \|\nabla V\|_2^{k},
\end{equation}
with $V\in H^1(\mathbb{R}^2)$.
\end{lemma}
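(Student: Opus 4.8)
The plan is to show that the flat cylinder $\mathbb{R}\times\mathbb{T}$ inherits the \emph{sharp} Euclidean constant $C_{k,\mathbb{R}}$ at the cost of a purely lower‑order term, in the spirit of Hebey and Vaugon. I recast \eqref{SGN} in variational form: for $B>0$ set
\[
\mu_B=\inf_{f\in H^1\setminus\{0\}}\frac{\|f\|_2^2\,\big(\|\nabla f\|_2^2+B\|f\|_2^2\big)^{k/2}}{\|f\|_{k+2}^{k+2}},
\]
so that \eqref{SGN} is exactly the assertion $\mu_B\ge 1/C_{k,\mathbb{R}}$ for some finite $B=C_{k,\mathbb{T}}$. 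The (non‑sharp) Sobolev embedding $H^1(\mathbb{R}\times\mathbb{T})\hookrightarrow L^{k+2}$ guarantees that $\mu_B>0$, and since the cylinder is locally isometric to $\mathbb{R}^2$, testing with a fixed profile $V\in C_c^\infty$ concentrated at scale $\lambda\to0$ inside a fundamental domain gives $\mu_B\le 1/C_{k,\mathbb{R}}$ for every $B$; moreover $\mu_B$ is nondecreasing in $B$. Thus it suffices to prove that the value $1/C_{k,\mathbb{R}}$ is attained at a \emph{finite} level of $B$.

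First I would argue by contradiction: suppose $\mu_j<1/C_{k,\mathbb{R}}$ for every integer $j$. Being strictly below the Euclidean concentration threshold $1/C_{k,\mathbb{R}}$, each $\mu_j$ is attained by a positive smooth minimizer $w_j$ with $\|w_j\|_{k+2}=1$. Indeed a minimizing sequence is bounded in $H^1$; by the concentration–compactness principle, after an $x$-translation it either concentrates at scale $\to 0$, which would force the ratio to exceed $1/C_{k,\mathbb{R}}$ (contradiction), or converges strongly in $L^{k+2}$ to a minimizer. From $\mu_j<1/C_{k,\mathbb{R}}$ one reads off directly $j^{k/2}\|w_j\|_2^{k+2}\le\mu_j$, hence $\|w_j\|_2\to0$, and then $\|\nabla w_j\|_2\to\infty$; so $w_j$ concentrates at a scale $\lambda_j\to0$ around a point $p_j$ which, after rotating $\mathbb{T}$, we may keep at distance $\ge 1/4$ from the seam of $\mathbb{T}$. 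Rescaling $g_j(z)=\lambda_j^{2/(k+2)}w_j(p_j+\lambda_j z)$ normalizes $\|g_j\|_{k+2}=\|\nabla g_j\|_2=1$, and on the domains $\lambda_j^{-1}(\mathbb{R}\times\mathbb{T}-p_j)$ (a strip times a circle of circumference $\lambda_j^{-1}\to\infty$) exhausting $\mathbb{R}^2$ the $g_j$ converge strongly in $H^1(\mathbb{R}^2)$ to a genuine extremal $g$ of \eqref{GN}; comparing $\|g_j\|_2^2=\|w_j\|_2^2\|\nabla w_j\|_2^k$ with the Euclidean inequality for $g$ forces $\|w_j\|_2^2\|\nabla w_j\|_2^k\to 1/C_{k,\mathbb{R}}$.

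The contradiction is then quantitative. Put $\tau_j=j\|w_j\|_2^2/\|\nabla w_j\|_2^2$, so that minimality reads $\mu_j=\big(\|w_j\|_2^2\|\nabla w_j\|_2^k\big)(1+\tau_j)^{k/2}<1/C_{k,\mathbb{R}}$. Two estimates collide. On one hand, the dimensional scaling of a bump at scale $\lambda_j$ gives $\|w_j\|_2^2/\|\nabla w_j\|_2^2\sim\lambda_j^2$, hence $\tau_j\sim j\lambda_j^2$. On the other hand, because $\mathbb{R}\times\mathbb{T}$ is \emph{flat} and the Euclidean ground state decays exponentially, the Euler–Lagrange equation $-\Delta w_j+m_jw_j=\kappa_jw_j^{k+1}$ with $m_j\sim\lambda_j^{-2}$ yields $w_j\lesssim e^{-c/\lambda_j}$ at the seam; cutting $w_j$ off there produces a bona fide $H^1(\mathbb{R}^2)$ competitor whose norms agree with those of $w_j$ up to relative error $O(e^{-c/\lambda_j})$. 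Applying \eqref{GN} to it gives $C_{k,\mathbb{R}}\|w_j\|_2^2\|\nabla w_j\|_2^k\ge 1-O(e^{-c/\lambda_j})$, and combined with $\mu_j<1/C_{k,\mathbb{R}}$ this forces the much stronger bound $\tau_j=O(e^{-c/\lambda_j})$. Therefore $j\lambda_j^2\lesssim e^{-c/\lambda_j}$, i.e.\ $j\lesssim \lambda_j^{-2}e^{-c/\lambda_j}\to 0$ as $\lambda_j\to 0$, contradicting $j\to\infty$. Hence $\mu_j=1/C_{k,\mathbb{R}}$ for all large $j$, which is precisely \eqref{SGN} with $C_{k,\mathbb{T}}$ equal to any such $j$.

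I expect the two delicate points to be, first, the concentration–compactness step producing the minimizers (ruling out dichotomy, where the hypothesis $\mu_j<1/C_{k,\mathbb{R}}$ is essential), and, above all, the quantitative flatness estimate in the last paragraph: extracting the \emph{exponentially} small defect $\tau_j=O(e^{-c/\lambda_j})$ from the exponential decay of the extremal together with the vanishing curvature of $\mathbb{R}\times\mathbb{T}$. This is exactly what upgrades the elementary (non‑sharp, partition‑of‑unity) bound to the sharp constant $C_{k,\mathbb{R}}$, and it is the crux of the whole argument.
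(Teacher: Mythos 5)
Your strategy is entirely different from the paper's and, more importantly, it rests on a misconception about what the ``elementary'' route can achieve. The paper's proof \emph{is} the partition-of-unity argument you dismiss as non-sharp: one takes $\eta_1,\eta_2\in C^\infty(\mathbb{T})$ with $\eta_1^2+\eta_2^2=1$ (squares summing to one, not the functions themselves) and supports inside fundamental domains, writes $\|f\|_{k+2}^{k+2}=\|\sum_j\eta_j^2f^2\|_{(k+2)/2}^{(k+2)/2}\le(\sum_j\|\eta_jf\|_{k+2}^2)^{(k+2)/2}$, applies the sharp Euclidean inequality \eqref{GN} to each $\eta_jf$ viewed on $\mathbb{R}^2$, and recombines with the H\"older inequality $\sum_ja_j^{2/(k+2)}b_j^{k/(k+2)}\le(\sum_ja_j)^{2/(k+2)}(\sum_jb_j)^{k/(k+2)}$. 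Because $\sum_j\eta_j^2=1$, the gradient terms reassemble exactly into $\|\nabla f\|_2^2$ plus a zeroth-order remainder $\int(\sum_jH_j)f^2$ with $H_j=-\tfrac12\partial_y^2(\eta_j^2)+(\partial_y\eta_j)^2\in L^\infty$, which is precisely the $C_{k,\mathbb{T}}\|f\|_2^2$ term. No constant is lost; the sharp $C_{k,\mathbb{R}}$ survives. So the ``crux'' you identify (the exponentially small defect from flatness) is not needed at all.

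As for your own argument: the skeleton is coherent and some steps are genuinely fine (e.g.\ $j^{k/2}\|w_j\|_2^{k+2}\le\mu_j$, and $\|w_j\|_2^2/\|\nabla w_j\|_2^2\sim\lambda_j^2$ actually follows from the two-sided bound $1/D\le\|w_j\|_2^2\|\nabla w_j\|_2^k\le\mu_j$ without invoking convergence to an extremal), but the three load-bearing steps are asserted rather than proved, and each is a substantial piece of analysis: (i) existence of a minimizer for $\mu_j$ below the concentration threshold (ruling out dichotomy requires a strict subadditivity argument you do not give); (ii) single-bubble concentration of $w_j$ near one point $p_j$ --- without this you cannot place all the mass at distance $\ge1/4$ from the cut, and the claimed strong $H^1(\mathbb{R}^2)$ convergence of the rescalings is itself a profile-decomposition theorem, not a remark; (iii) the uniform pointwise bound $w_j\lesssim e^{-c/\lambda_j}$ away from $p_j$, which needs an a priori $L^\infty$ smallness estimate outside the bubble to linearize the Euler--Lagrange equation before any comparison argument can run. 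Until (i)--(iii) are supplied, the quantitative contradiction $j\lambda_j^2\lesssim e^{-c/\lambda_j}$ is not established. In short: the route could probably be completed, but it replaces a one-page elementary computation with an unproven blow-up analysis, so as written it does not constitute a proof of Lemma \ref{LemGN}.
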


\begin{remark}\label{Rema1}
Note that
\begin{itemize}
\item The inequality \eqref{SGN} cannot hold with $C_{k,\mathbb{T}}=0$. Indeed, if this is the case, then taking $f(x,y)=g(x)\in H^1(\mathbb{R})$ we obtain
$$
\|g\|^{k+2}_{L^{k+2}(\mathbb{R})}\lesssim  \|g\|^2_{L^2(\mathbb{R})}\|g'\|^k_{L^2(\mathbb{R})},
$$
which, by a scaling argument, is not true for all $g(x)\in H^1(\mathbb{R})$.
\item The constant $C_{k,\mathbb{R}} $ is sharp in the sense that it cannot be replaced by a smaller number such that \eqref{SGN} holds for any positive constant $c>0$ inside the parentheses in the right hand side. Indeed, assume that there exists $D_k<C_{k,\mathbb{R}}$ such that for some positive constant $c>0$ it holds
\begin{equation}\label{g1}
\|f\|^{k+2}_{k+2}\leq  D_k \|f\|^2_2\left(\|\nabla f\|^2_2+c\|f\|^2_2\right)^{k/2}, \,\,\,\mbox{for all}\,\,\, f\in H^1(\mathbb{R}\times\mathbb{T}).
\end{equation}
Let $f_{\delta}$ be a smooth function compactly supported in an open ball $B_{\delta}\subset \mathbb{R}\times [0,1]$ for some radius $\delta>0$ to be chosen later. From the Holder inequality we have
\begin{equation}\label{g2}
\|f_{\delta}\|_{2}\leq |B_{\delta}|^{\frac{k}{2(k+2)}}\|f_{\delta}\|_{k+2}.
\end{equation}

Taking $\varepsilon>0$ such that $D_k(1+\varepsilon)<C_{k,\mathbb{R}}$ and then $c_{k,\varepsilon}>0$ such that
$$
(a+b)^{k/2}\leq (1+\varepsilon)a^{k/2}+c_{k,\varepsilon}b^{k/2}, \,\,\,\mbox{for all}\,\,\, a,b>0,
$$
it follows from \eqref{g1}-\eqref{g2} that 
$$
\|f_{\delta}\|^{k+2}_{k+2}\leq  D_k (1+\varepsilon)\|f_{\delta}\|^2_2\|\nabla f_{\delta}\|^k_2+cD_kc_{k,\varepsilon}|B_{\delta}|^{k/2}\|f_{\delta}\|^{k+2}_{k+2}.
$$
So, for $\delta>0$ small enough, we can find $D_k<D'_k<C_{k,\mathbb{R}}$ such that
$$
\|f_{\delta}\|^{k+2}_{k+2}\leq  D'_k \|f_{\delta}\|^2_2\|\nabla f_{\delta}\|^k_2,
$$
reaching a contradiction with the sharp Gagliardo-Nirenberg \eqref{GN}.
\end{itemize}
\end{remark}

%As a consequence of Lemma \ref{LemGN} and Remark \ref{Rema1} we can define $C_{k,\mathbb{T}}>0$ to be the infimum of all constants $c>0$ such that the inequality \eqref{SGN} holds. So, we can write the following sharp Gagliardo Nirenberg type inequality 
%\begin{equation}\label{SGN2}
%\|f\|^{k+2}_{k+2}\leq C_{k,\mathbb{R}} \|f\|^2_2\left(\|\nabla f\|^2_2+C_{k,\mathbb{T}}\|f\|^2_2\right)^{k/2}\!\!, \,\,\mbox{for} \,\, f\in H^1(\mathbb{R}\times\mathbb{T}).
%\end{equation}

%Let $C_{k,\mathbb{T}}>0$ be the the infimum of all constants $c>0$ such that the inequality \eqref{SGN} holds. So, we can write the following sharp Gagliardo Nirenberg type inequality 
%\begin{equation}\label{SGN2}
%\|f\|^{k+2}_{k+2}\leq C_{k,\mathbb{R}} \|f\|^2_2\left(\|\nabla f\|^2_2+C_{k,\mathbb{T}}\|f\|^2_2\right)^{k/2}\!\!, \,\,\mbox{for} \,\, f\in H^1(\mathbb{R}\times\mathbb{T}).
%\end{equation}

From the celebrated work of Weinstein \cite{W83}, the best constant $C_{k,\mathbb{R}}>0$ for the
Gagliardo-Nirenberg inequality in $\mathbb{R}^2$ \eqref{GN} is closely related to a particular solution of
\begin{equation}\label{ground}
\Delta Q-Q+Q^{k+1}=0.
\end{equation}
Indeed, denoting by $Q_k$ the unique positive radial solution of \eqref{ground}, it is known that
\begin{equation}\label{opt1}
C_{k,\mathbb{R}}=\frac{2^{\frac{k-2}{2}}(k+2)}{k^{\frac k 2}\|Q_k\|_{2}^k}.
\end{equation}
Moreover, we have the following identities
$$
\frac{ k}{ 2} \|Q_k\|_{{2}}^{2}=\|\nabla Q_k\|_{{2}}^{2} \quad \mbox{and} \quad \|Q_k\|^{k+2}_{k+2}=\frac{k+2}{2}\|Q_k\|_{{2}}^{2} .
$$
%(see also \cite{}  for a formal derivation). 
From the definition of the conserved quantities mass and energy \eqref{M}-\eqref{H}, we also have, for $s_k=(k-2)/k$, that
\begin{equation}\label{rel2}
H(Q_k)^{s_k} M(Q_k)^{1-s_k}=\left(\frac{k-2}{4}\right)^{\frac{k-2}{k}}\|Q_k\|_{{2}}^{2}\quad \mbox{and} \quad \|\nabla
Q_k\|_{2}^{s_k}\|Q_k\|_{2}^{1-s_k}=\left(\frac{k}{2}\right)^{\frac{k-2}{2k}}\|Q_k\|_{{2}}.
\end{equation}

We now state our global well-posedness theory for small data in the energy space (a similar result in the $\mathbb{R}^2$ case can be found in \cite[Theorem 1.3]{FLP12}).
\begin{theorem}\label{global1}
Let $ k\ge 2$. 
We denote by $Q_k$ the unique positive radial solution of equation \eqref{ground} and 
 for $u_0\in H^1(\mathbb{R}\times\mathbb{T})$  we called $u\in C([-T,T]; H^1(\R\times\T) $, with $T>0 $, the corresponding solution to the gZK equation.
\begin{itemize}
\item[(i)] Assume $k=2$. If 
\begin{equation}\label{GR0}
\|u_0\|_{2}< \|Q_2\|_{2},
\end{equation}
then the solution $u$ exists globally in time, i.e. $u\in C_b(\R;H^1(\R\times\T))$.
\item[(ii)] Assume $k\geq 3$ and let $s_k=(k-2)/k$. If
\begin{equation}\label{GR1}
\left(H(u_0)+\frac{C_{k,\mathbb{T}}}{2}M(u_0)\right)^{s_k} M(u_0)^{1-s_k} < H(Q_k)^{s_k} M(Q_k)^{1-s_k}.
\end{equation}
and 
\begin{equation}\label{GR2}
\left(\|\nabla u_0\|^2_{2}+C_{k,\mathbb{T}}\|u_0\|_{2}^2\right)^{\frac{s_k}{2}}\|u_0\|_{2}^{1-s_k} < \|\nabla
Q_k\|_{2}^{s_k}\|Q_k\|_{2}^{1-s_k},
\end{equation}
then the solution $u$ exists globally in time, i.e. $u\in C_b(\R;H^1(\R\times\T))$.
\end{itemize}
\end{theorem}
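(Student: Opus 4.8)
The plan is to reduce the global statement to an \emph{a priori} uniform-in-time bound on $\|u(t)\|_{H^1}$ and then to conclude by the blow-up alternative attached to the local theory. Indeed, since $s_0(k)<1$, Theorem \ref{LWP} applies in the energy space and the local existence time depends only on $\|u_0\|_{H^{s_0(k)+}}$, hence is controlled by $\|u_0\|_{H^1}$; by uniqueness and iteration the solution extends as long as its $H^1$-norm stays finite. As the mass $M$ in \eqref{M} is conserved, $\|u(t)\|_2=\|u_0\|_2$, so it suffices to bound $\|\nabla u(t)\|_2$ uniformly on the existence interval. To this end I introduce the conserved modified energy $E(u):=H(u)+\frac{C_{k,\mathbb{T}}}{2}M(u)=\frac12\big(\|\nabla u\|_2^2+C_{k,\mathbb{T}}\|u\|_2^2\big)-\frac{1}{k+2}\|u\|_{k+2}^{k+2}$, and I set $y(t):=\|\nabla u(t)\|_2^2+C_{k,\mathbb{T}}\|u(t)\|_2^2$. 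Combining mass conservation with the sharp inequality \eqref{SGN} of Lemma \ref{LemGN} yields, for every $t$ in the existence interval, the scalar inequality $E(u_0)\ge g(y(t))$, where $g(z):=\frac12 z-\frac{C_{k,\mathbb{R}}}{k+2}\|u_0\|_2^2\,z^{k/2}$. The entire argument is then a study of the real function $g$.

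For $k=2$ the function $g$ is linear, $g(z)=\frac12\big(1-\frac{C_{2,\mathbb{R}}}{2}\|u_0\|_2^2\big)z$. Inserting the explicit value $C_{2,\mathbb{R}}=2/\|Q_2\|_2^2$ coming from \eqref{opt1}, the slope equals $\frac12\big(1-\|u_0\|_2^2/\|Q_2\|_2^2\big)$, which is strictly positive exactly under hypothesis \eqref{GR0}. The bound $E(u_0)\ge g(y(t))$ then gives $y(t)\le 2E(u_0)\big(1-\|u_0\|_2^2/\|Q_2\|_2^2\big)^{-1}$ uniformly in $t$, hence controls $\|\nabla u(t)\|_2$ and settles case (i).

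For $k\ge 3$ the function $g$ is strictly concave on $(0,\infty)$, vanishes at $0$, increases to a unique interior maximum at $z_*=\big(\frac{k+2}{k\,C_{k,\mathbb{R}}\|u_0\|_2^2}\big)^{2/(k-2)}$ and then decreases; a short computation gives the maximal value $g(z_*)=\frac{k-2}{2k}z_*$. The key algebraic step is to verify, using the expression \eqref{opt1} for $C_{k,\mathbb{R}}$ and the ground-state identities \eqref{rel2}, that hypothesis \eqref{GR1} is \emph{equivalent} to the energy condition $E(u_0)<g(z_*)$, while hypothesis \eqref{GR2} is equivalent to the size condition $y(0)<z_*$. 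In both cases one simply raises the stated inequality to the positive power $1/s_k$ (respectively $2/s_k$) and simplifies, checking that the monotone change of variable preserves strictness. Granting these two translations, I run a \emph{continuity (bootstrap) argument}: when $0<E(u_0)<g(z_*)$ the sublevel set $\{z\ge 0:g(z)\le E(u_0)\}$ is the disjoint union $[0,z_-]\cup[z_+,\infty)$ with $0<z_-<z_*<z_+$. Since $t\mapsto y(t)$ is continuous and takes values in this set, and since $y(0)<z_*$ together with $g(y(0))\le E(u_0)$ forces $y(0)\le z_-$ by monotonicity of $g$ on $[0,z_*]$, the trajectory cannot jump across the gap $(z_-,z_+)$ and must stay in $[0,z_-]$ for all time. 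This gives $\|\nabla u(t)\|_2^2\le y(t)\le z_-$ uniformly, and case (ii) follows from the blow-up alternative as above.

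The main obstacle is precisely this matching step: turning the two conservation-law hypotheses \eqref{GR1}--\eqref{GR2} into the clean geometric conditions $E(u_0)<g(z_*)$ and $y(0)<z_*$. This is where the precise value of the best constant $C_{k,\mathbb{R}}$ in \eqref{opt1} and the Pohozaev-type relations \eqref{rel2} for $Q_k$ are indispensable, and one must be careful that the (monotone) exponentiations do not weaken the strict inequalities. The strictness is itself essential: it is what guarantees a nonempty gap $(z_-,z_+)$ trapping $y(t)$, and hence what prevents the solution from reaching the critical threshold $z_*$ where the Gagliardo--Nirenberg control degenerates. Once the two conditions are aligned, the continuity argument and the continuation criterion are entirely standard.
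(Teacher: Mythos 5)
Your argument is correct and follows essentially the same route as the paper: conservation of mass and of the (modified) energy combined with the sharp Gagliardo--Nirenberg inequality \eqref{SGN} reduces matters to the scalar inequality $g(y(t))\le E(u_0)$, and the continuity/trapping argument together with the translation of \eqref{GR0}--\eqref{GR2} via \eqref{opt1} and \eqref{rel2} is exactly the paper's proof (your $g$ and $E$ are the paper's $f$ and $A_k$ up to a factor of $2$). Your write-up is in fact slightly more explicit than the paper's on the two points it leaves to the reader, namely the identification of the hypotheses with $E(u_0)<g(z_*)$ and $y(0)<z_*$, and the bootstrap across the gap $(z_-,z_+)$.
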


\begin{remark}
Before we proceed let us make three observations.
\begin{itemize}
\item The restriction in $(i)$ is the same as in the $\mathbb{R}^2$ case (see, for instance, \cite[Theorem 1.3]{LP09} and \cite[Remark 1.4]{FLP11}).
\item Taking $k=2$ in $(ii)$ we recover $(i)$.
\item For $k\geq 3$, the assumption \eqref{GR2} implies that $H(u_0)+\frac{C_{k,\mathbb{T}}}{2}M(u_0)$ is a non negative quantity. Indeed, by the sharp Gagliardo Nirenberg type inequality \eqref{SGN} and identities \eqref{opt1} and  \eqref{rel2}, we obtain
\begin{eqnarray}
H(u_0)+\frac{C_{k,\mathbb{T}}}{2}M(u_0)&\geq & \frac{1}{2}\left(\|\nabla u_0\|^2_{2}+C_{k,\mathbb{T}}\|u_0\|_{2}^2\right)-\frac{C_{k,\mathbb{R}}}{k+2}\|u_0\|_{2}^2\left(\|\nabla u_0\|^2_2+C_{k,\mathbb{T}}\|u_0\|^2_2\right)^{k/2}\\
&=&\frac{1}{2}\left(\|\nabla u_0\|^2_{2}+C_{k,\mathbb{T}}\|u_0\|_{2}^2\right)\left(1-\frac{2C_{k,\mathbb{R}}}{k+2}\left((\|\nabla u_0\|^2_2+C_{k,\mathbb{T}}\|u_0\|^2_2)^{\frac{
s_k}{2}}\|u_0\|_{2}^{1-s_k}\right)^k\right)\\
&\geq &\frac{1}{2}\left(\|\nabla u_0\|^2_{2}+C_{k,\mathbb{T}}\|u_0\|_{2}^2\right)\left(1-\frac{2C_{k,\mathbb{R}}}{k+2}\left(\|\nabla
Q_k\|_{2}^{s_k}\|Q_k\|_{2}^{1-s_k}\right)^k\right)\\
&=&\frac{k-2}{2k}\left(\|\nabla u_0\|^2_{2}+C_{k,\mathbb{T}}\|u_0\|_{2}^2\right).
\end{eqnarray}
\end{itemize}
\end{remark}
%The global existence in $ H^s(\R\times\T) $, $ s\ge 1$,  of the solutions to \eqref{ZKk} for small enough initial data in $ H^1(\R\times\T) $ is a straightforward consequence of the conservation laws \eqref{M} and \eqref{H}. More precisely, the classical Gagliardo-Niremberg inequality ensures that there exists $ C>0 $ such that for any $ k\ge 2 $ and any $u\in H^1(\R\times\T) $, 
%\begin{equation}\label{eqQ}
%\|u\|_{L^{k+2}}^{k+2} \le C_0 \, \|u\|_{H^1}^k  \|u\|_{L^2}^2 \; .
%\end{equation}
%Combining \eqref{M}-\eqref{eqQ} and Theorem \ref{LWP}, we infer that for  any  solution $ u \in C([0,T];H^1) $ of \eqref{ZKk} constructed in Theorem  \ref{LWP}, the quantity  $ X(t)=\|u(t)\|_{H^1}^2 $ satisfies
%\begin{align*} 
%X(t)&  \le M(u_0) +2 E(u_0) + \frac{2C_0}{k+2} M(u_0) X(t)^{\frac{k}{2}} \\
%& \le C(\|u_0\|_{H^1})  + \frac{2C_0}{k+2} \|u_0\|_{L^2}^2 X(t)^{\frac{k}{2}}, \quad \forall t\in [0,T] \; .
%\end{align*}
%This  forces  $t\mapsto  \|u(t)\|_{H^1}  $ to be bounded on $ [0,T] $ for $\|u_0\|_{H^1} $ small  enough and leads to the following corollary. 
%
%\begin{corollary}
%\label{CorGWP}
%[Global existence for small $H^1$-data] For any $  k\ge 2 $ there exists $ \gamma(k)>0 $ such that any  solution to \eqref{ZKk}  constructed in Theorem \ref{LWP}  emanating from an initial data $u_0\in H^1(\R\times\T)  $ with $ \|u_0\|_{H^1}< \gamma(k) $ is global in time.
%\end{corollary}
\begin{remark}
 Theorem \ref{LWP} also ensures the global existence of $ H^1(\R\times\T) $-solutions for the defocusing version of \eqref{ZKk}, i.e. 
$$
u_t -\partial_x ( u^{k+1}) + u_{xxx} + u_{xyy} = 0,
$$
with $ k $ odd. Indeed, in this case the energy conservation law (see \eqref{H}) has a positive sign in front of the potential term which is positive since $ k $ is odd. Therefore, together with the mass conservation (see \eqref{M}), the $ H^1(\R\times\T) $ norm of the solution is uniformly bounded in time and, since the time of existence obtained in Theorem \ref{LWP} depends on the norm of the initial data, the solution is global.
\end{remark}
The plan of this paper is the following: in the next section we introduce our notations and the function spaces we will work with. In Section 3, we  establish our $ L^4$-Strichartz estimate and prove the local well-posedness result. Finally, the last section is devoted to the proof of Lemma \ref{LemGN} followed by the global well-posedness result.

\section{Preliminaries}

In this section we introduce some notation and also the function spaces used  in the sequel.

\subsection{Notations}
For any positive numbers $a$ and $b$, the notation $a \lesssim b$
means that there exists a positive constant $c$ such that $a \le c
b$. We also write $a \sim b$ when $a \lesssim b$ and $b \lesssim a$.
If $\alpha \in \mathbb R$, then $\alpha+$, respectively $\alpha-$,
will denote a number slightly greater, respectively lesser, than
$\alpha$. If $A$ and $B$ are two positive numbers, we use the
notation $A\wedge B=\min(A,B)$ and $A \vee B=\max(A,B)$. Finally,
$\text{mes} \, S$ or $|S|$ denotes the  measure of a
measurable set $S$ of $\R $, $ \T $ or $ \R\times\Z $, endowed with their usual measures.

We use the notation $|(\xi,q)|=\sqrt{3\xi^2+q^2}$ for  $(\xi,q) \in \mathbb R\times \Z$.
For $u=u(t,x,y) \in \mathcal{S}(\mathbb R^2\times\T)$, $\mathcal{F}(u)$, or
$\widehat{u}$, will denote its space-time Fourier transform, whereas
$\mathcal{F}_{xy}(u)$, or $(u)^{\wedge_{xy}}$, respectively
$\mathcal{F}_t(u)=(u)^{\wedge_t}$, will denote its Fourier transform
in space, respectively in time.

Throughout the paper, we fix a smooth cutoff function $\eta$ such that
\begin{displaymath}
\eta \in C_0^{\infty}(\mathbb R), \quad 0 \le \eta \le 1, \quad
\eta_{|_{[-5/4,5/4]}}=1 \quad \mbox{and} \quad  \mbox{supp}(\eta)
\subset [-8/5,8/5].
\end{displaymath}
For $k \in \mathbb N^{\star}=\mathbb Z \cap [1,+\infty)$, we define
\begin{displaymath}
\phi(\xi)=\eta(\xi)-\eta(2\xi), \quad
\phi_{2^k}(\xi,q):=\phi(2^{-k}|(\xi,q)|).
\end{displaymath}
and
\begin{displaymath}
\psi_{2^k}(\xi,q ,\tau)=\phi(2^{-k}(\tau-(\xi^3+\xi q^2))).
\end{displaymath}
By convention, we also denote
\begin{displaymath}
\phi_1(\xi,q)=\eta(|(\xi,q)|), \quad \text{and} \quad
\psi_1(\xi,q,\tau)=\eta(\tau-(\xi^3+\xi q^2)).
\end{displaymath}
Any summations over capitalized variables such as $N, \, L$, $K$ or
$M$ are presumed to be dyadic with $N, \, L$, $K$ or $M \ge 1$,
\textit{i.e.}, these variables range over numbers of the form $\{2^k
: k \in \mathbb N\} $ where $ \N= \Z\cap \R_+ $. Then, we have that
\begin{displaymath}
\sum_{N}\phi_N(\xi,\mu)=1, \quad \mbox{supp} \, (\phi_N) \subset
\{\frac{5}{8}N\le |(\xi,\mu)| \le \frac{8}5N\}=:I_N \ , \ N \ge 2,
\end{displaymath}
and
\begin{displaymath}
\mbox{supp} \, (\phi_1) \subset \{|(\xi,q)| \le \frac85\}=:I_1  .
\end{displaymath}
Let us define the Littlewood-Paley multipliers by
\begin{equation}\label{proj}
P_Nu=\mathcal{F}^{-1}_{xy}\big(\phi_N\mathcal{F}_{xy}(u)\big), \quad
Q_Lu=\mathcal{F}^{-1}\big(\psi_L\mathcal{F}(u)\big).
\end{equation}

Finally, we denote by $e^{-t\partial_x\Delta}$ the free group
associated with the linearized part of equation \eqref{ZKk}, which is
to say,
\begin{equation} \label{V}
\mathcal{F}_{xy}\big(e^{-t\partial_x\Delta}\varphi
\big)(\xi,q)=e^{itw(\xi,q)}\mathcal{F}_{xy}(\varphi)(\xi,q),
\end{equation}
where $w(\xi,q)=\xi^3+\xi q^2$. We also define the resonance
function $H$ by
\begin{equation} \label{Resonance}
H(\xi_1,q_1,\xi_2,q_2)=w(\xi_1+\xi_2,q_1+q_2)-w(\xi_1,q_1)-w(\xi_2,q_2).
\end{equation}
Straightforward computations give that
\begin{equation} \label{Resonance2}
H(\xi_1,q_1,\xi_2,q_2)=3\xi_1\xi_2(\xi_1+\xi_2)+\xi_2q_1^2+\xi_1q_2^2+2(\xi_1+\xi_2)q_1q_2.
\end{equation}

\subsection{Function spaces}
For $1 \le p \le \infty$, $L^p(\mathbb R\times\T)$ is the usual Lebesgue
space with the norm $\|\cdot\|_{L^p}$, and for $s \in \mathbb R$ ,
the real-valued Sobolev space $H^s(\mathbb R\times\T)$  denotes the space
of all real-valued functions with the usual norm
$\|u\|_{H^s}=\|\sqrt{1+\xi^2+q^2} \; \hat{u}\|_{L^2(\R\times\Z)}.$
If $u=u(x,y,t)$ is a function defined for $(x,y) \in
\mathbb R\times\T$ and $t$ in the time interval $[0,T]$, with $T>0$, and $B$
is one of the spaces defined above, $1 \le p \le \infty$ and $1 \le q \le \infty$, we will
define the mixed space-time spaces $L^p_TB_{xy}$ and 
$L^p_tB_{xy}$ by the norms
\begin{displaymath}
\|u\|_{L^p_TB_{xy}} =\Big(
\int_0^T\|u(\cdot,\cdot,t)\|_{B}^pdt\Big)^{\frac1p} \quad , \quad
\|u\|_{L^p_tB_{xy}} =\Big( \int_{\mathbb R}\|u(\cdot,\cdot,t)\|_{B}^pdt\Big)^{\frac1p},
\end{displaymath}
%and
%\begin{displaymath}
%\|u\|_{L^q_{xy}L^p_T}= \left(\int_{\mathbb R^2}\Big( \int_0^T|u(x,y,t)|^pdt\Big)^{\frac{q}{p}}dx\right)^{\frac1q},
%\end{displaymath}
if $1 \le p, \ q < \infty$ with the obvious modifications in the case $p=+\infty$ or $q=+\infty$.

For $s$, $b \in \mathbb R$, we introduce the Bourgain spaces
$X^{s,b}$ related to the linear part of \eqref{ZKk} as
the completion of the Schwartz space $\mathcal{S}(\mathbb R^3)$
under the norm
\begin{equation} \label{Bourgain}
\|u\|_{X^{s,b}} = \left(
\sum_{q\in\Z} \int_{\mathbb{R}^2}\langle\tau-w(\xi,q)\rangle^{2b}\langle
|(\xi,q)|\rangle^{2s}|\widehat{u}(\xi,q, \tau)|^2 d\xi 
d\tau \right)^{\frac12},
\end{equation}
where $\langle x\rangle:=\sqrt{1+|x|^2}$. Moreover, we define a localized (in time) version of these spaces.
Let $T>0$ be a positive time. Then, if $u: \mathbb R\times\T \times
[0,T]\rightarrow \mathbb R$, we have that
\begin{displaymath}
\|u\|_{X^{s,b}_{T}}=\inf \{\|\tilde{u}\|_{X^{s,b}} \ : \ \tilde{u}:
\mathbb R\times\T \times \mathbb R \rightarrow \mathbb R, \
\tilde{u}|_{\mathbb R\times\T \times [0,T]} = u\}.
\end{displaymath}

\section{Local well-posedness theory}
Our main tools to prove the well-posedness  result are a  $L^4$-Strichartz estimate that seems to be new and a bilinear estimate that already appeared in \cite{MP15}.
\subsection{ Strichartz and bilinear estimates}
We will need the following technical lemmas (see for instance  Saut and Tzvetkov \cite{ST01}):
\begin{lemma} \label{basic1}
Consider a set $\Lambda \subset X \times Y$, where
$X=\mathbb R$ or $\mathbb Z$ and $  Y \in \{\R,\Z,\R\times\Z \}$. Assume that $ \Lambda \subset I \times Y $ with  $I \subset \mathbb R$ and  that
there exists $C>0$ such that for any fixed $x_0 \in  I \cap X$, $|\Lambda
\cap \{(x_0,y), \, y  \in Y\}| \le C$. Then, we get that 
$|\Lambda| \le C |I|$ in the case where $X=\mathbb R$ and $|\Lambda| \le C( |I|+1)$ 
in the case where $X=\mathbb Z$.
\end{lemma}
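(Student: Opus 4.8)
The plan is to prove the claimed covering bound by slicing the set $\Lambda$ along the $X$-coordinate and integrating (or summing) the uniform fibre bound over the admissible range $I$ of that coordinate.

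\medskip

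\noindent\textbf{Setup and slicing.} First I would observe that by hypothesis every point of $\Lambda$ has its $X$-coordinate lying in $I$, so $\Lambda \subset (I\cap X)\times Y$. The measure on $X\times Y$ is the product of the usual measure on $X$ (Lebesgue if $X=\R$, counting if $X=\Z$) with the measure on $Y$. By Fubini/Tonelli for the product measure, I would write
\begin{equation}
|\Lambda| = \int_{X} \big|\Lambda \cap (\{x_0\}\times Y)\big| \, dx_0,
\end{equation}
where $dx_0$ denotes Lebesgue measure when $X=\R$ and counting measure when $X=\Z$, and where the inner quantity is the measure (in $Y$) of the $x_0$-fibre of $\Lambda$. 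The key input is the hypothesis: for each fixed $x_0\in I\cap X$ the fibre has measure at most $C$, and for $x_0\notin I\cap X$ the fibre is empty because $\Lambda\subset I\times Y$.

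\medskip

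\noindent\textbf{The two cases.} With the slicing identity in hand, the bound splits into the two cases according to the nature of $X$. If $X=\R$, then the integrand is bounded by $C$ and supported in $I$, so
\begin{equation}
|\Lambda| = \int_{I} \big|\Lambda\cap(\{x_0\}\times Y)\big|\,dx_0 \le \int_I C\,dx_0 = C\,|I|,
\end{equation}
which is the first claim. If $X=\Z$, the integral becomes a sum over the integers $x_0\in I\cap\Z$, each fibre again contributing at most $C$, so
\begin{equation}
|\Lambda| = \sum_{x_0\in I\cap\Z} \big|\Lambda\cap(\{x_0\}\times Y)\big| \le C\,\#(I\cap\Z).
\end{equation}
It then remains to bound the number of integers in an interval $I$ by $|I|+1$, which gives $|\Lambda|\le C(|I|+1)$, the second claim.

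\medskip

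\noindent\textbf{Main obstacle.} The argument is essentially bookkeeping once the measure-theoretic slicing is justified, so there is no deep difficulty; the only genuine point requiring care is the elementary counting estimate $\#(I\cap\Z)\le |I|+1$ for an interval $I\subset\R$, which accounts for the additive $+1$ in the discrete case (an interval of length $\ell$ can contain up to $\lfloor\ell\rfloor+1$ integers). One should also make sure the Fubini step is legitimate, which it is since $\Lambda$ is assumed measurable and the fibre bound guarantees the fibres are measurable with uniformly controlled measure; no integrability issue arises because $I$ has finite measure in the relevant application (and if $|I|=\infty$ the stated bound is vacuous). I expect essentially all the work to lie in writing these routine steps cleanly rather than in any conceptual hurdle.
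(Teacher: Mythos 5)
Your proof is correct, and it is the standard argument: the paper does not actually prove this lemma but simply refers to Saut and Tzvetkov \cite{ST01}, where exactly this Fubini/Tonelli slicing is used, so there is nothing in the paper to diverge from. One small caveat worth recording: the counting estimate $\#(I\cap\Z)\le |I|+1$, which produces the $+1$ in the discrete case, requires $I$ to be an interval (for a general measurable $I\subset\R$ it fails, e.g.\ a finite set of integers has measure zero), and you correctly restrict to that case; this is an imprecision in the lemma's statement rather than in your argument, and in the paper's application (Lemma \ref{basic2} and the proof of Proposition \ref{prop31}) $I$ is indeed an interval.
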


\begin{lemma} \label{basic2}
Let $a \neq 0, \ b, \ c$ be real numbers and $I$ be an interval on
the real line. Then,
\begin{equation} \label{basicIII1}
\text{mes} \, \{x \in \R \ / \ ax^2+bx+c \in I\} \lesssim
\frac{|I|^{\frac12}}{|a|^{\frac12}}.
\end{equation}
and
\begin{equation} \label{basicIII2}
\# \{q \in \mathbb Z  \ / \ aq^2+bq+c \in I\} \lesssim
\frac{|I|^{\frac12}}{|a|^{\frac12}}+1.
\end{equation}
\end{lemma}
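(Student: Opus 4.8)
The plan is to reduce both inequalities to a single computation by completing the square and then analyzing the level sets of the map $u \mapsto u^2$. First I would write
\[
ax^2+bx+c = a\left(x+\frac{b}{2a}\right)^2 + \left(c-\frac{b^2}{4a}\right) =: a\,u^2 + \gamma ,
\]
with $u = x + \frac{b}{2a}$ and $\gamma = c - \frac{b^2}{4a}$. The condition $ax^2+bx+c \in I$ is then equivalent to $u^2 \in J$, where $J := \frac{1}{a}(I-\gamma)$ is an interval of length $|J| = |I|/|a|$ (the sign of $a$ only reverses the orientation of $J$ and is absorbed by the absolute value). Since $u^2 \ge 0$, only the part $J \cap [0,\infty)$ is relevant; if this intersection is empty the sets in question are empty and there is nothing to prove.

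Next, writing $J \cap [0,\infty) = [p,s]$ with $0 \le p \le s$ and $s-p \le |J|$, the relation $u^2 \in [p,s]$ holds exactly when $u \in [-\sqrt{s},-\sqrt{p}] \cup [\sqrt{p},\sqrt{s}]$, so that $x = u - \frac{b}{2a}$ ranges over a union of two intervals, each of length $\sqrt{s}-\sqrt{p}$. The elementary estimate $\sqrt{s}-\sqrt{p} \le \sqrt{s-p}$, valid for $0 \le p \le s$ since $(\sqrt{s}-\sqrt{p})^2 = s+p-2\sqrt{sp} \le s-p$, then yields $\sqrt{s}-\sqrt{p} \le \sqrt{|J|} = |I|^{1/2}/|a|^{1/2}$.

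For the continuous bound \eqref{basicIII1} I would simply add the measures of the two intervals, obtaining $\text{mes}\,\{x \in \R : ax^2+bx+c \in I\} \le 2\,|I|^{1/2}/|a|^{1/2}$, which is the claimed estimate. For the integer bound \eqref{basicIII2}, the same description shows that $\{q \in \Z : aq^2+bq+c \in I\}$ is contained in a union of at most two intervals, each of length at most $|I|^{1/2}/|a|^{1/2}$; since any interval of length $\ell$ contains at most $\ell+1$ integers, the count is at most $2\big(|I|^{1/2}/|a|^{1/2}+1\big) \lesssim |I|^{1/2}/|a|^{1/2}+1$. The only point that requires care — and the closest thing to an obstacle in an otherwise entirely elementary argument — is the square-root estimate, together with the bookkeeping for the sign of $a$ and the degenerate case $J \cap [0,\infty) = \emptyset$.
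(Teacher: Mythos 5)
Your proof is correct and complete. The paper does not actually prove Lemma \ref{basic2}; it simply cites Saut and Tzvetkov \cite{ST01}, and the argument you give — completing the square, reducing to $u^2\in J$ with $|J|=|I|/|a|$, describing the preimage as two intervals of length $\sqrt{s}-\sqrt{p}\le\sqrt{s-p}$, and adding $+1$ per interval for the integer count — is precisely the standard one behind that reference, with all the minor points (sign of $a$, empty intersection with $[0,\infty)$) handled correctly.
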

Let us  now prove a Strichartz estimate  in the context of Bourgain's spaces in $ \R\times\T $.
\begin{proposition}[$L^4$-Strichartz estimate] \label{prop31} For any $ u\in X^{1/6,3/8}(\R\times \T) $ it holds 
\begin{equation}\label{stri4}
\| u \|_{L^4_{txy}} \lesssim \|u\|_{X^{1/6,3/8}} \; .
\end{equation}
\end{proposition}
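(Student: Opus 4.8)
The plan is to prove the estimate by duality, reducing it to a bilinear $L^2$ estimate and then exploiting the measure lemmas to bound the size of the relevant resonant sets. By squaring, $\|u\|_{L^4_{txy}}^2 = \|u^2\|_{L^2_{txy}}$, so it suffices to control $\|u^2\|_{L^2}$. I would therefore try to establish a bilinear estimate of the form
\begin{equation}
\|(P_{N_1}Q_{L_1}u_1)(P_{N_2}Q_{L_2}u_2)\|_{L^2_{txy}} \lesssim C(N_1,N_2,L_1,L_2)\,\|P_{N_1}Q_{L_1}u_1\|_{L^2}\,\|P_{N_2}Q_{L_2}u_2\|_{L^2},
\end{equation}
with a gain $C(N_1,N_2,L_1,L_2)$ that, after summation over dyadic blocks, reproduces the $X^{1/6,3/8}$ norm on the right. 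The standard reduction is to write the product via Plancherel as a convolution in Fourier space and use Cauchy--Schwarz: the $L^2$ norm of the product is controlled by the square root of the measure of the set over which one integrates the convolution, times the product of the $L^2$ norms of the pieces.

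First I would dyadically decompose each factor in both the spatial frequency $|(\xi,q)|$ and the modulation $\langle\tau - w(\xi,q)\rangle$ using the projectors $P_N$ and $Q_L$ from \eqref{proj}. For fixed output frequency and modulation, the convolution writes $\widehat{u_1 u_2}(\zeta,\tau)$ as an integral of $\widehat{u_1}(\zeta_1,\tau_1)\widehat{u_2}(\zeta-\zeta_1,\tau-\tau_1)$, and by Cauchy--Schwarz the key quantity is the measure of the set of admissible $(\zeta_1,\tau_1)$. Integrating out $\tau_1$ constrains the two modulation variables, so the effective constraint lives on the spatial frequencies $(\xi_1,q_1)$: one must bound the measure of
\begin{equation}
\{(\xi_1,q_1) : |(\xi_1,q_1)|\sim N_1,\ |(\xi-\xi_1,q-q_1)|\sim N_2,\ H(\xi_1,q_1,\xi-\xi_1,q-q_1)\in \text{interval of length } \sim L_{\max}\},
\end{equation}
where the resonance function $H$ is given explicitly in \eqref{Resonance2} and $L_{\max}$ is the largest modulation. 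This is where Lemmas \ref{basic1} and \ref{basic2} enter: for fixed $\xi_1$, the condition on $H$ is a quadratic constraint in $q_1$ (with leading coefficient $\sim \xi_1$ or $\xi_2$), so \eqref{basicIII2} bounds the number of admissible integer values of $q_1$ by $\sim (L_{\max}/|\xi|_{\min})^{1/2}+1$; then \eqref{basicIII1} or the slab structure bounds the $\xi_1$-integration via Lemma \ref{basic1}. Putting these together yields a measure bound of the shape $L_{\max}^{1/2}\,(\text{min frequency factor})^{1/2}$.

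The main obstacle I anticipate is extracting exactly the exponents $1/6$ in $s$ and $3/8$ in $b$ from the measure estimate, and ensuring the dyadic sums converge. The numerology must be arranged so that the factor $C(N_1,N_2,L_1,L_2)$ splits, after symmetrization in the two factors, into $(N_1 N_2)^{-1/6}(L_1 L_2)^{-3/8}$ times a summable remainder; the value $b=3/8$ is dictated by needing a modulation gain $L_{\max}^{1/2}$ distributed symmetrically as $(L_1 L_2)^{3/8}\cdot L_{\text{med}}^{\text{(small loss)}}$, and the $s=1/6$ comes from the residual frequency loss in the measure bound. The delicate case is the high-high-to-low interaction where $N_1\sim N_2\gg N$ and the variable $\xi$ (or $q$) controlling the leading coefficient of the quadratic can be small, degrading the gain in \eqref{basicIII2}; I would handle this by a further subdivision according to whether $|\xi_1|$, $|\xi_2|$ or $|q|$ is the dominant quantity governing the transversal derivative, using the explicit form \eqref{Resonance2} to always locate a nondegenerate quadratic in one of the remaining free variables. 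Once the worst case is shown to respect the target exponents, the remaining cases are easier and the full estimate follows by summing the geometric series in the dyadic parameters.
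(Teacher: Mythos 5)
Your overall architecture coincides with the paper's: square the $L^4$ norm, decompose dyadically in both frequency and modulation, reduce via Plancherel and Cauchy--Schwarz to a measure bound on the set where the resonance function is $O(L_1\vee L_2)$, invoke Lemmas \ref{basic1}--\ref{basic2}, and resum. Your guess at the numerology is also consistent with what the paper proves, namely $\|P_{N_1}Q_{L_1}u_1\,P_{N_2}Q_{L_2}u_2\|_{L^2}\lesssim (N_1\wedge N_2)^{1/3}(L_1\wedge L_2)^{1/2}(L_1\vee L_2)^{1/4}\|P_{N_1}Q_{L_1}u_1\|_{L^2}\|P_{N_2}Q_{L_2}u_2\|_{L^2}$; the spare factors $(N_{\min}/N_{\max})^{1/6}$ and $(L_{\min}/L_{\max})^{1/8}$ then make the dyadic sums converge against the $X^{1/6,3/8}$ norm.

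The gap is in your treatment of the degenerate case. From \eqref{Resonance2} one computes $\partial^2_{\xi_1}H=3\xi$ and $\partial^2_{q_1}H=2\xi$ (see \eqref{der2}), where $\xi=\xi_1+\xi_2$ is the \emph{output} frequency --- not $\xi_1$ or $\xi_2$ as you assert. Consequently, when $\xi$ is small \emph{both} quadratics degenerate simultaneously, and there is no ``remaining free variable'' in which to locate a nondegenerate quadratic: your proposed subdivision according to whether $|\xi_1|$, $|\xi_2|$ or $|q|$ dominates cannot rescue this case. The paper's resolution is different, and you omit its key ingredient: since $u$ is real-valued one may assume (Bourgain's argument, as in the proof of Lemma 4 of \cite{ST01}) that $\widehat{u}_1$ and $\widehat{u}_2$ are supported in $\{\xi\ge 0\}$, so that $0\le\xi_1\le\xi$ and the $\xi_1$-range itself has measure at most $\xi$. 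Splitting on $\xi\le(N_1\wedge N_2)^{2/3}$ versus $\xi\ge(N_1\wedge N_2)^{2/3}$, the small-$\xi$ case gives $|B|\lesssim\xi\bigl((L_{\max}/\xi)^{1/2}+1\bigr)\lesssim L_{\max}^{1/2}(N_1\wedge N_2)^{2/3}$, while the large-$\xi$ case gives $|B|\lesssim(N_1\wedge N_2)(L_{\max}/\xi)^{1/2}\lesssim L_{\max}^{1/2}(N_1\wedge N_2)^{2/3}$; the exponent $2/3$ is exactly the balance point of the two regimes and, after taking square roots, is the source of the $1/6$ per factor. Without the positivity reduction, in the small-$\xi$ regime one can have $\xi_1\approx-\xi_2$ with $|\xi_1|$ as large as $N_1\wedge N_2$, the $\xi_1$-range is no longer small, and the measure bound --- hence the estimate --- fails along the lines you propose.
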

\begin{proof}
\eqref{stri4} is a direct consequence of the following bilinear estimate 
\begin{equation}\label{bi2}
\|P_{N_1} Q_{L_1} u_1  P_{N_2} Q_{L_2} u_2 \|_{L^2_{txy}} \lesssim 
(N_1\wedge N_2)^{1/3} (L_1\wedge L_2)^{1/2} (L_1\vee L_2)^{1/4} \|P_{N_1}Q_{L_1}u\|_{L^2}\,\|P_{N_2}Q_{L_2}u_2\|_{L^2}
\end{equation}
where $u_1$ and $u_2$ are two functions in $L^2(\R^2\times \T)$.
Indeed with \eqref{bi2} in hand, we get the following chain of inequalities
\begin{align*}
\| u^2\|_{L^2} \le & \; 2  \sum_{N_1\ge N_2} \sum_{L_1\ge L_2} 
\|P_{N_1} Q_{L_1} u  P_{N_2} Q_{L_2} u \|_{L^2_{txy}}
+ \|P_{N_1} Q_{L_2} u_1  P_{N_2} Q_{L_1} u \|_{L^2_{txy}}\\
 \lesssim &  \sum_{N_1\ge N_2} \sum_{L_1\ge L_2}  N_2^{1/3} L_1^{1/4} L_2^{1/2} \Bigl( 
\|P_{N_1} Q_{L_1} u\|_{L^2}  \|P_{N_2} Q_{L_2} u \|_{L^2}+ \|P_{N_1} Q_{L_2} u\|_{L^2}  \|P_{N_2} Q_{L_1} u \|_{L^2}\Bigr) \\
\lesssim &  \sum_{m,m_2\ge 0} \sum_{l,l_2\ge 0}  2^{m_2/3} 2^{(l_2+l)/4} 
2^{l_2/2} \\
 & \hspace*{20mm} \Bigl( 
\|P_{2^{(m_2+m)}} Q_{2^{(l_2+l)}} u\|_{L^2}  \|P_{2^{m_2}} Q_{2^{l_2}} u \|_{L^2}
 + \|P_{2^{(m_2+m)}} Q_{2^{l_2}} u\|_{L^2}  \|P_{2^{m_2}} Q_{2^{(l_2+l)}} u \|_{L^2}\Bigr) \\
 \lesssim &  \sum_{m,m_2\ge 0} \sum_{l,l_2\ge 0}  2^{m_2/6} 2^{(m_2+m)/6} 2^{-m/6} 2^{3(l_2+l)/8} 
2^{3 l_2/8} 2^{-l/8} \\
 & \hspace*{10mm}\Bigl( 
\|P_{2^{(m_2+m)}} Q_{2^{(l_2+l)}} u\|_{L^2}  \|P_{2^{m_2}} Q_{2^{l_2}} u \|_{L^2}+ \|P_{2^{(m_2+m)}} Q_{2^{l_2}} u\|_{L^2}  \|P_{2^{m_2}} Q_{2^{(l_2+l)}} u \|_{L^2}\Bigr) \\
 \lesssim &  \sum_{m,l \ge 0} 2^{-\frac{m}{6}} 2^{-\frac{l}{8}} \Bigl[ 
\Bigl( \sum_{m_2,l_2\ge 0}  2^{\frac{m_2}{3}} 2^{\frac{3 l_2}{4} }  \|P_{2^{m_2}} Q_{2^{l_2}} u \|_{L^2}^2\Bigr)^{1/2}
 \Bigl( \sum_{m_2,l_2\ge 0}2^{\frac{m_2+m}{3}}2^{\frac{3(l_2+l)}{4}}
\|P_{2^{(m_2+m)}} Q_{2^{(l_2+l)}} u\|_{L^2}^2\Bigr)^{1/2}  \\
 & \hspace*{10mm}+\Bigl( \sum_{m_2,l_2\ge 0}  2^{\frac{(m_2+m)}{3}} 2^{\frac{3 l_2}{4} }  \|P_{2^{(m_2+m)}} Q_{2^{l_2}} u \|_{L^2}^2\Bigr)^{1/2}
 \Bigl(  \sum_{m_2,l_2\ge 0}2^{\frac{m_2}{3}}2^{\frac{3(l_2+l)}{4}}
\|P_{2^{m_2}} Q_{2^{(l_2+l)}} u\|_{L^2}^2\Bigr)^{1/2}\Bigr]   \\
  \lesssim & \| u\|_{X^{1/6,3/8}}^2
\end{align*}
Now to prove \eqref{bi2} we may first assume that $ u_1 $ and $ u_2$ have non negative real  space-time Fourier transform since 
$$
\|P_{N_1} Q_{L_1} u_1  P_{N_2} Q_{L_2} u_2 \|_{L^2_{txy}}=\|\phi_{N_1} \psi_{L_1} \widehat{u}_1 \ast 
 \phi_{N_2} \psi_{L_2} \widehat{u}_2\|_{L_2} \lesssim \|\phi_{N_1} \psi_{L_1} |\widehat{u}_1| \ast 
 \phi_{N_2} \psi_{L_2} |\widehat{u}_2|\|_{L_2} \;.
 $$ 
 and $ \| \phi_{N_1} \psi_{L_1} |\widehat{u_i}|\|_{L^2_{txy} }=\|P_{N_1} Q_{L_1} u_i \|_{L^2_{txy}}$, $i=1,2$. 
 Then it is crucial to notice that since $ u $ is real-valued, according to  the argument given by Bourgain \cite{B93II}  (see also the proof of Lemma 4 in \cite{ST01}), we may assume that $\supp  \widehat{u}_1 $ and $ \supp \widehat{u}_2  \subset \R\times \R_+\times \Z $. 
  By using the Plancherel theorem and the Cauchy-Schwarz inequality, we get
  \begin{equation}\label{la1}
  \|P_{N_1} Q_{L_1} u_1  P_{N_2} Q_{L_2} u_2 \|_{L^2_{txy}}
  \lesssim\sup_{(\tau,\xi,q)\in\R\times\R_+\times\Z}|A(\tau,\xi,q)|^{1/2} \|P_{N_1} Q_{L_1} u_1\|_{L^2_{txy}} \|P_{N_2} Q_{L_2} u_2\|_{L^2_{txy}}
 \end{equation}
 where, according to Lemma \ref{basic1},
\begin{eqnarray*}
|A(\tau,\xi,q)| 
 & = &  mes \Bigl\{ (\tau_1,\xi_1,q_1)\in \R\times  \R_+\times \Z  \, \slash \,  \; \xi-\xi_1\in  \R_+,
   \langle  \tau_1-\xi_1^3-\xi_1 q_1^2)\rangle \sim L_1, \\
  & & 
\langle \tau-\tau_1-(\xi-\xi_1)^2-(\xi-\xi_1)(q-q_1)^2 \rangle \sim L_2 , \; \langle (\xi_1,q_1) \rangle \sim N_1  \mbox{ and }\,\langle (\xi-\xi_1,q-q_1) \rangle \sim N_2 \Bigr\} \\
 & \lesssim &  (L_1\wedge L_2) \;  \text{mes} \,   B(\tau,\xi,q)\; , 
  \end{eqnarray*}
with 
\begin{eqnarray*}
   B(\tau,\xi,q) &:=& \Bigl\{  (\xi_1,q_1)\in \R_+\times \Z \ / \; \xi-\xi_1\ge 0,   \langle (\xi_1,q_1) \rangle \sim N_1 ,\,\langle (\xi-\xi_1,q-q_1) \rangle \sim N_2   \\
    & & \mbox{ and } \langle H(\xi_1,q_1,\xi-\xi_1,q-q_1) \rangle \lesssim L_1 \vee L_2  \Bigr\}  \; .
    \end{eqnarray*}
Here $ H $ is the resonance function defined in \eqref{Resonance}.
Notice that, in view of \eqref{Resonance2}, $  \xi_1 \mapsto H (\xi_1,q_1,\xi-\xi_1,q-q_1) $  and 
 $q_1 \mapsto H (\xi_1,q_1,\xi-\xi_1,q-q_1) $ 
 are  polynomial functions of order 2 with 
\begin{equation}\label{der2}
\partial^2_{\xi_1} H (\xi_1,q_1,\xi-\xi_1,q-q_1)= 3\xi \quad \text{and} \quad
\partial^2_{q_1} H (\xi_1,q_1,\xi-\xi_1,q-q_1)= 2\xi \; .
\end{equation}
We separate two cases : $ \xi\le (N_1\wedge N_2)^{2/3} $ and $ \xi\ge (N_1\wedge N_2)^{2/3} $.
On one hand, for $ \xi\le (N_1\wedge N_2)^{2/3} $, the second identity in \eqref{der2}  together with Lemmas \ref{basic1}-\ref{basic2} lead to 
$$
|B(\tau,\xi,q)| \lesssim \xi \Bigl( \Bigl( \frac{L_1\vee L_2}{2 \xi}\Bigr)^{1/2} +1\Bigr)
\lesssim (L_1\vee L_2)^{1/2} \xi^{1/2} + \xi \lesssim  (L_1\vee L_2)^{1/2} 
 (N_1\wedge N_2)^{2/3} 
$$
since $ 0\le \xi_1\le \xi \le (N_1\wedge N_2)^{2/3} $ 
 for all $(\tau_1,	\xi_1,q_1)\in A(\tau,\xi,q)$ and $N_1\wedge N_2 \ge 1$. On the other hand, for $ \xi\ge (N_1\wedge N_2)^{2/3} $, the first identity in \eqref{der2}  together with Lemmas \ref{basic1}-\ref{basic2} lead to 
$$
|B(\tau,\xi,q)| \lesssim (N_1\wedge N_2) \Bigl( \frac{L_1\vee L_2}{3 \xi}\Bigr)^{1/2} 
\lesssim  (L_1\vee L_2)^{1/2} 
 (N_1\wedge N_2)^{2/3} \;.
$$
Gathering the above estimates and \eqref{la1}, \eqref{bi2} follows that completes the proof of the proposition.
\end{proof}
We will also  make use of the following bilinear estimate that appeared in \cite{MP15}. 
\begin{proposition}[Bilinear  estimate]\label{prop32}
Let $N_1$, $N_2$,  be dyadic numbers in $\{2^k, k\in \N\}$ and let  $u_1$ and $u_2$  be two functions in $L^2(\R^3)$. Then, 
\begin{equation}\label{bi1}
\|(P_{N_1} u_1)(P_{N_2} u_2)\|_{L^2}
\lesssim \frac{(N_1 \wedge N_2)^{\frac{1}{2}+}}{(N_1\lor N_2)^{1-}}
\|P_{N_1} u_1\|_{X^{0,\frac{1}{2}-} }\,\|P_{N_2}u_2\|_{X^{0,\frac{1}{2}-}},
\end{equation}
whenever $N_1\ge 4N_2$ or $N_2\ge 4N_1$. 
\end{proposition}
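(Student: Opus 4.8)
The plan is to adapt the scheme of Proposition~\ref{prop31}: decompose each factor into modulation pieces, reduce the $L^2$-norm of each dyadic product block to the measure of a resonance set, and then sum over modulations. By symmetry I may assume $N_1\ge 4N_2$, so that $N_1\wedge N_2=N_2$ and $N_1\vee N_2=N_1$. Writing $P_{N_i}u_i=\sum_{L_i}P_{N_i}Q_{L_i}u_i$, reducing to nonnegative space-time Fourier transforms as in the derivation of \eqref{la1}, and applying Plancherel together with the Cauchy-Schwarz inequality, I would obtain
\[
\|(P_{N_1}Q_{L_1}u_1)(P_{N_2}Q_{L_2}u_2)\|_{L^2}\lesssim \sup_{(\tau,\xi,q)}|A(\tau,\xi,q)|^{1/2}\,\|P_{N_1}Q_{L_1}u_1\|_{L^2}\|P_{N_2}Q_{L_2}u_2\|_{L^2},
\]
where, exactly as in Proposition~\ref{prop31}, $|A|\lesssim (L_1\wedge L_2)\,|B|$ and $B=B(\tau,\xi,q)$ is the spatial resonance set of those $(\xi_1,q_1)\in\R\times\Z$ satisfying $\langle(\xi_1,q_1)\rangle\sim N_1$, $\langle(\xi-\xi_1,q-q_1)\rangle\sim N_2$ and $\langle H(\xi_1,q_1,\xi-\xi_1,q-q_1)\rangle\lesssim L_1\vee L_2$.

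The key new point is the measure of $B$, where the frequency separation is decisive. Differentiating the resonance function \eqref{Resonance2} in $\xi_1$ along the constraint $\xi_2=\xi-\xi_1$, $q_2=q-q_1$ (with $q_1$ fixed) and using the convention $|(\xi,q)|^2=3\xi^2+q^2$, one finds the clean identity
\[
\partial_{\xi_1}H(\xi_1,q_1,\xi-\xi_1,q-q_1)=|(\xi-\xi_1,q-q_1)|^2-|(\xi_1,q_1)|^2 .
\]
On $B$ the first term is $\sim N_2^2$ and the second is $\sim N_1^2$, so $N_1\ge 4N_2$ forces $|\partial_{\xi_1}H|\sim N_1^2$ with constant sign; moreover $\partial^2_{\xi_1}H=3\xi$ with $|\xi|\lesssim N_1$ only changes this by $\lesssim N_1N_2\ll N_1^2$ over the relevant $\xi_1$-interval. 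Hence, for each fixed $q_1$, the map $\xi_1\mapsto H$ is strictly monotone with derivative $\gtrsim N_1^2$, so its sublevel set has $\xi_1$-measure $\lesssim (L_1\vee L_2)/N_1^2$. Since $\langle(\xi-\xi_1,q-q_1)\rangle\sim N_2$ restricts $q_1$ to $\lesssim N_2$ integer values (and $\xi_1$ to an interval of length $\lesssim N_2$), I obtain the two bounds
\[
|B|\lesssim N_2\,\min\!\Big(N_2,\ \tfrac{L_1\vee L_2}{N_1^{2}}\Big).
\]
I would stress that, unlike the casework in Proposition~\ref{prop31}, this uses only the \emph{first} derivative of $H$ and needs no splitting according to the size of $\xi$; the large first derivative $\sim(N_1\vee N_2)^2$ is exactly what produces the strong gain $(N_1\vee N_2)^{-1}$.

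It then remains to interpolate and sum. Writing $|B|\lesssim N_2^{\,2-\theta}(L_1\vee L_2)^{\theta}N_1^{-2\theta}$ for $\theta\in[0,1]$ and inserting the resulting per-block bound $(L_1\wedge L_2)^{1/2}(L_1\vee L_2)^{\theta/2}N_2^{\,1-\theta/2}N_1^{-\theta}$, the dyadic sum over $L_1,L_2$ reduces, after setting $b=\tfrac12-$ and factoring the $N$'s out, to
\[
\sum_{L_1,L_2}(L_1\wedge L_2)^{1/2}(L_1\vee L_2)^{\theta/2}L_1^{-b}L_2^{-b}\,\big(L_1^{b}\|P_{N_1}Q_{L_1}u_1\|_{L^2}\big)\big(L_2^{b}\|P_{N_2}Q_{L_2}u_2\|_{L^2}\big),
\]
which a Schur/Cauchy-Schwarz argument controls by $\|P_{N_1}u_1\|_{X^{0,\frac12-}}\|P_{N_2}u_2\|_{X^{0,\frac12-}}$, provided $\theta$ is taken slightly below $1$ in tandem with $b$ slightly below $\tfrac12$. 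The leftover spatial factor is then $N_2^{\,1-\theta/2}N_1^{-\theta}=N_2^{1/2+}N_1^{-1+}$ for $\theta=1-$, which is precisely \eqref{bi1}. The main obstacle is exactly this last step: the modulation sum is only borderline (logarithmically) convergent, and it is to absorb this that the losses $+$ and $-$ in $\frac{(N_1\wedge N_2)^{1/2+}}{(N_1\vee N_2)^{1-}}$, together with the choice $b=\tfrac12-$ rather than $b=\tfrac12$, are indispensable; the clean endpoint $\frac{(N_1\wedge N_2)^{1/2}}{N_1\vee N_2}$ with $X^{0,1/2}$ would fail to close.
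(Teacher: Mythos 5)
Your argument is correct, but it takes a genuinely different route from the paper. The paper's proof is two lines: it interpolates the trivial bound \eqref{trivilabi} with the bilinear estimate \eqref{bi11} of Molinet--Pilod (Proposition 3.6 in \cite{MP15}), which is quoted as a black box. You instead give a self-contained proof in the style of Proposition \ref{prop31}, and the heart of it --- the identity $\partial_{\xi_1}H(\xi_1,q_1,\xi-\xi_1,q-q_1)=|(\xi-\xi_1,q-q_1)|^2-|(\xi_1,q_1)|^2$, which under $N_1\ge 4N_2$ forces $|\partial_{\xi_1}H|\sim (N_1\vee N_2)^2$ with constant sign on the constraint set and hence $|B|\lesssim (N_1\wedge N_2)\min\bigl(N_1\wedge N_2,\ (L_1\vee L_2)/(N_1\vee N_2)^{2}\bigr)$ --- is exactly the mechanism behind \eqref{bi11}; your $\theta=1$ endpoint recovers that estimate, and your interpolation $\min(a,b)\le a^{1-\theta}b^{\theta}$ at the level of the measure of $B$ is the block-level version of the interpolation the paper performs between the two global estimates. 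I checked the details: the derivative identity follows from \eqref{Resonance2}, the sign and size of $\partial_{\xi_1}H$ are uniform on the support of $\phi_{N_1}\phi_{N_2}$ once $N_1\ge 4N_2$ (so no case split on $\xi$ is needed, as you correctly note), the count of $\lesssim N_2$ admissible $q_1$ is right, and the Schur-type modulation sum closes precisely under a constraint of the form $\tfrac12-b<\tfrac{1-\theta}{4}$, consistent with your statement that $b=\tfrac12-$ and $\theta=1-$ must be chosen in tandem. What each approach buys: the paper's is shorter but outsources the key measure estimate to an external reference; yours is self-contained, exposes why frequency separation yields the gain $(N_1\vee N_2)^{-1+}$ (a uniformly large \emph{first} derivative of the resonance function, versus the second-derivative/case-split argument needed for comparable frequencies in Proposition \ref{prop31}), and makes explicit the otherwise implicit dependence among the $+$ and $-$ losses in \eqref{bi1}.
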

\begin{proof} 
The trivial  estimate 
$$
\|P_{N_1} Q_{L_1} u_1  P_{N_2} Q_{L_2} u_2 \|_{L^2_{txy}} \lesssim 
(N_1\wedge N_2)(L_1\wedge L_2)^{1/2} \|P_{N_1}Q_{L_1}u\|_{L^2}\,\|P_{N_2}Q_{L_2}u_2\|_{L^2}
$$
 leads to 
\begin{equation}\label{trivilabi}
\|(P_{N_1} u_1)(P_{N_2} u_2)\|_{L^2} 
\lesssim   (N_1 \wedge N_2)
\|P_{N_1} u_1\|_{X^{0,\frac{1}{4}+} }\,\|P_{N_2}u_2\|_{X^{0,\frac{1}{4}+}},
\end{equation}
Interpolating this last estimate with the following bilinear estimate obtained by 
Pilod and the first author (\textit{c.f.}  Proposition 3.6 in \cite{MP15}) 
\begin{equation}\label{bi11}
\|(P_{N_1} u_1)(P_{N_2} u_2)\|_{L^2}
\lesssim \frac{(N_1 \wedge N_2)^{\frac{1}{2}}}{N_1\lor N_2}
\|P_{N_1} u_1\|_{X^{0,\frac{1}{2}+} }\,\|P_{N_2}u_2\|_{X^{0,\frac{1}{2}+}},
\end{equation}
we obtain \eqref{bi1}.
\end{proof}

\subsection{Multilinear estimate for gZK in  $\R \times \T$}
With Propositions \ref{prop31}-\ref{prop32} in hand, we can prove the main result of  this section that  reads as follows.
\begin{proposition}\label{progZK} Let $ k\ge 2 $ and let $s> 5/6$ if $k=2 $ and $ s> 1- $ if 
 $k\ge 3$. Then, for $0<\delta\ll 1 $ it holds
\begin{equation}\label{ME-gZK-a}
\|\partial_x(\prod_{i=1}^{k+1} u_i)\|_{X^{s, -\frac{1}{2}+2\delta}}\le \sum_{\sigma\in {\mathcal S}_k}C_{k,s} 
\|u_{\sigma(1)}\|_{X^{s, \frac{1}{2}+\delta}}\prod_{i=2}^3 \|u_{\sigma(i)}\|_{X^{\frac{5}{6}+, \frac{1}{2}+\delta}}
\prod\limits_{j=4}^{k+1}\|u_{\sigma(j)}\|_{X^{1-, \frac{1}{2}+\delta}}.
\end{equation}
\end{proposition}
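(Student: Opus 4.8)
The plan is to argue by duality and a Littlewood--Paley case analysis, using the $L^4$-Strichartz estimate of Proposition \ref{prop31} for resonant (frequency-comparable) interactions, the bilinear estimate of Proposition \ref{prop32} for frequency-separated ones, and a two-dimensional Bernstein inequality to absorb the remaining low-order factors. First I would dualize: since the dual of $X^{s,-1/2+2\delta}$ under the space-time $L^2$ pairing is $X^{-s,1/2-2\delta}$, it suffices to bound
\[
\Big|\int_{\R^2\times\T}\partial_x\Big(\prod_{i=1}^{k+1}u_i\Big)\,w\,dx\,dy\,dt\Big|
\]
uniformly over all $w$ with $\|w\|_{X^{-s,1/2-2\delta}}\le 1$. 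Then I would decompose every factor dyadically, writing the output/test frequency as $N$ and ordering the input frequencies $N_1\ge N_2\ge\cdots\ge N_{k+1}$; the spatial derivative contributes a factor $\lesssim N$, and frequency localization forces $N\lesssim N_1$. The sum over $\sigma\in\mathcal S_k$ is used to always place the largest input frequency in the $X^{s}$-slot, the next two in the $X^{5/6+}$-slots, and the remaining $k-2$ in the $X^{1-}$-slots.

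Next I would organize the estimate by H\"older. The $k-2$ lowest-frequency factors are put in $L^\infty_{txy}$ and controlled by Bernstein in the two spatial variables together with the embedding $X^{0,1/2+}\hookrightarrow L^\infty_t L^2_{xy}$, so that $\|P_{N_j}u_j\|_{L^\infty_{txy}}\lesssim N_j\|P_{N_j}u_j\|_{X^{0,1/2+}}$; this costs essentially one full derivative per factor, which is exactly the origin of the $X^{1-}$ weights. The four remaining factors, namely $w$ and the three highest-frequency $u_i$, are grouped into two pairs and estimated by Cauchy--Schwarz in $L^2_{txy}$. For each pair I would invoke Proposition \ref{prop32} whenever the two paired frequencies are $4$-separated, and Proposition \ref{prop31} (which costs $\tfrac16$ of a derivative per $L^4$ factor, the origin of the $X^{5/6+}$ weights) when they are comparable. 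The point of pairing a high frequency with a low one through the bilinear estimate is that its gain $(N_1\wedge N_2)^{1/2+}/(N_1\vee N_2)^{1-}$ more than compensates the derivative loss, so that every high--low regime produces a negative net power of the largest frequency and sums.

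The hard part will be the fully resonant regime, in which all the frequencies in play are comparable, so Proposition \ref{prop32} is unavailable and the full $x$-derivative must be absorbed by the $L^4$-Strichartz estimate alone. Here the homogeneity is borderline: the derivative, the four $L^4$-factors and the two secondary regularity weights balance exactly, $1+4\cdot\tfrac16-2\cdot\tfrac56=0$ (the $X^{s}$ and $X^{-s}$ contributions canceling), so that only the small surpluses hidden in the exponents $5/6+$ and $3/8<1/2-2\delta$ leave room to sum the dyadic series by Cauchy--Schwarz. This balance is what pins the two secondary factors to regularity $5/6+$, which for $k=2$ are the only non-top factors, giving the threshold $s>5/6$. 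When $k\ge3$, the extra $k-2$ factors cannot be paired off and must instead be parked in $L^\infty$, each costing a full spatial derivative in dimension two; they therefore appear at regularity $1-$, and since the top slot must carry at least as much regularity as the others, the admissible range is pushed up to $s>1-$. Finally I would reassemble the dyadic pieces into the Bourgain norms by Cauchy--Schwarz in the frequency parameters and take the supremum over $w$, which yields \eqref{ME-gZK-a}.
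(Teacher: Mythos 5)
Your proposal follows essentially the same route as the paper's proof: a dyadic decomposition ordered by frequency size, the $L^4$-Strichartz estimate of Proposition \ref{prop31} (together with its dual) for the resonant trilinear core, with the exponent balance $1+4\cdot\tfrac16=2\cdot\tfrac56$ pinning down the $5/6$ threshold, the bilinear estimate of Proposition \ref{prop32} for frequency-separated pairs, and Bernstein combined with $X^{0,\frac12+}\hookrightarrow L^\infty_t L^2_{xy}$ to place the remaining $k-2$ factors in $L^\infty_{txy}$ at the cost of $X^{1-}$. The only real difference is organizational: you dualize globally at the outset, whereas the paper works directly with the $X^{s,-\frac12+2\delta}$ norm and only passes to a dual test function in the subcase $N_2\ll N\sim N_1$.
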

\begin{proof}
We use the non homogeneous decomposition to write 
$$
I_k:=\|\partial_x(\prod_{i=1}^{k+1} u_i)\|_{X^{s, -\frac{1}{2}+2\delta}} \lesssim 
\sum_{N\ge 1} \sum_{N_1, N_2, .., N_k\ge 1} N^{1+s}
 \Bigl\|P_N \Bigl( \prod_{i=1}^{k+1} P_{N_i} u_i\Bigr)\Bigr\|_{X^{0, -\frac{1}{2}+2\delta}}
$$
By symmetry  we may assume $ N_1\ge N_2 \ge .. \ge N_{k+1}$.\\
${\bf 1.} $ $ N\lesssim 1$. Then we have 
\begin{eqnarray*}
I_k  &\lesssim  &  \sum_{N_1\ge N_2\ge ..\ge N_{k+1}} 
 \Bigl\|P_{\lesssim 1} \Bigl( \prod_{i=1}^{k+1} P_{N_i} u_i\Bigr)\Bigr\|_{L^2_t L^1_{xy}} \\
 & \lesssim &   \sum_{N_1\ge N_2\ge ..\ge N_{k+1}} \prod_{i=1}^{k+1}  \|P_{N_i} u_i \|_{L^{2(k+1)}_t L^{k+1}_{xy}}   \\
 &\lesssim & \prod\limits_{i=1}^{k+1}\|u_i\|_{X^{(1-\frac{2}{k+1})+ , \frac{1}{2}+\delta}}.
\end{eqnarray*}
${\bf 2.}  $ $N \gg 1 $. We separate  two contributions.

${\bf 2.1.} $ $N_3\gtrsim N_1 $.  Then we must have $ N_3\gtrsim N/k $ and using \eqref{stri4} on $ u_1, u_2, u_3 $ and its dual estimate 
we can bound this contribution by 
\begin{eqnarray*}
I_k  &\lesssim  &   \sum_{N\gg 1} \sum_{N_1\ge N_2\ge N_3\gtrsim N/k}  \sum_{N_4,.., N_{k+1}\le N_3} 
 N^{1+s+1/6} \Bigl\|P_{N} \Bigl( \prod_{i=1}^{k+1} P_{N_i} u_i\Bigr)\Bigr\|_{L^{4/3}_{txy}} \\
 &\lesssim  &  \sum_{N\gg 1} \sum_{N_1\ge N_2\ge N_3\gtrsim N/k}  \sum_{N_4,.., N_{k+1}\le N_3} 
 N^{1+s+1/6} \prod_{i=1}^3 \|P_{N_i} u_i \|_{L^4_{txy}} 
 \Bigl\|\prod_{i=4}^{k+1} P_{N_i} u_i\Bigr\|_{L^{\infty}_{txy}} \\
 &\lesssim  &  \sum_{N\gg 1} \sum_{N_1\ge N_2\ge N_3\gtrsim N/k}  \sum_{N_4,.., N_{k+1}\le N_3} 
 N^{1+s+1/6} N_1^{-s+1/6}  (N_2 N_3)^{-\theta+1/6} \\
& & \hspace*{50mm} \|P_{N_1} u_1 \|_{X^{s,3/8}}\prod_{i=2}^3 \|P_{N_i} u_i \|_{X^{\theta,3/8}} 
\prod_{i=4}^{k+1} \Bigl\| P_{N_i} u_i\Bigr\|_{L^{\infty}_{txy}} \; ,
\end{eqnarray*}
for $ \theta>0 $.
We notice that  $ -2\theta +1 +2/3<0 \Leftrightarrow \theta>5/6  $ and thus that for $ \theta=5/6+\varepsilon  $ with $\varepsilon>0 $ it holds 
$$
N^{1+s+1/6} N_1^{-s+1/6}  (N_2 N_3)^{-\theta+1/6}\le c_k N_1^{-2\varepsilon} \; .
$$
On the other hand by Bernstein and Sobolev inequalities since $ N_1=\displaystyle\max_{i=1,..,k+1} N_i $ it holds 
\begin{equation}\label{tuy}
N_1^{0-} \prod_{k=4}^{k+1} \|P_{N_k} u_k \|_{L^\infty_{txy}} \lesssim N_1^{0-} \prod_{k=4}^{k+1} N_k \|P_{N_k} u_k \|_{L^\infty_{t} L^2_{xy}} 
\lesssim  \prod_{k=4}^{k+1} \|P_{N_k} u_k \|_{X^{1-,\frac{1}{2}+}} \; .
\end{equation}
 Therefore, using part of the coefficient  $N_1^{-2\varepsilon} $ for  \eqref{tuy} and another part to re-sum   we get 
$$
I_k    
 \lesssim  c_k  \|P_{N_1} u_1 \|_{X^{s,3/8}} \prod_{i=2}^3 \|P_{N_i} u_i \|_{X^{\frac{5}{6}+,3/8}} 
\prod_{i=4}^{k+1} \Bigl\| P_{N_i} u_i\Bigr\|_{X^{1-,1/2+}} 
$$
${\bf 2.2.} $ $ N_3\ll N_1 $.\\
${\bf 2.2.1.} $ $ N_2\gtrsim N $.
 Then by using the dual estimate of \eqref{stri4}  together with \eqref{stri4}, \eqref{bi11} and again \eqref{tuy}  we get 
\begin{eqnarray*}
I_k  &\lesssim  &  
 \sum_{N\gg 1\atop N_1\ge N_2 \gtrsim N} \sum_{N_{k+1}\le ..\le N_3 \ll N_1}   
  \hspace*{-8mm}  N^{1+s+1/6} \Bigl\|P_{N} \Bigl( \prod_{i=1}^{k+1} P_{N_i} u_i\Bigr)\Bigr\|_{L^{4/3}_{txy}} \\
 & \lesssim &  \sum_{N\gg 1\atop N_1\ge N_2 \gtrsim N} \sum_{N_{k+1}\le ..\le N_3 \ll N_1}   
 \hspace*{-8mm}  N^{1+s} N^{1/6} \|P_{N_1} u_1 P_{N_3} u_3 \|_{L^2_{txy}} 
\|P_{N_2} u_2 \prod_{i=4}^{k+1} P_{N_i} u_i  \|_{L^4_{txy}}\\
& \lesssim &\sum_{N\gg 1\atop N_1\ge N_2 \gtrsim N} \sum_{N_{k+1}\le ..\le N_3 \ll N_1}
 \hspace*{-8mm}  N^{1+s} N^{1/6} N_1^{-1} N_3^{1/2}\| P_{N_1} u_1 \|_{X^{0,\frac{1}{2}+}} \| P_{N_3}u_3 \|_{X^{0,\frac{1}{2}+}}  
\| P_{N_2} u_2\|_{L^{4}_{txy}}  \prod_{i=4}^{k+1} \|P_{N_i} u_i  \|_{L^{\infty}_{txy}}\\
 & \lesssim & \sum_{N\gg 1\atop N_1\ge N_2 \gtrsim N} \sum_{N_{k+1}\le ..\le N_3 \ll N_1}
 \hspace*{-8mm}  N^{1+s+1/6} N_1^{-1} N_3^{1/2} N_2^{\frac{1}{6}}  N_1^{0+}
 \prod_{i=1}^3 \|P_{N_i} u_i \|_{X^{0,\frac{1}{2}+}} 
\prod_{i=4}^{k+1} \Bigl\| P_{N_i} u_i\Bigr\|_{X^{1-,\frac{1}{2}+}} \\
& \lesssim &   \|u_1 \|_{X^{s,\frac{1}{2}+}} 
 \prod_{i=2}^3 \| u_2 \|_{X^{\frac{5}{12}+,\frac{1}{2}+}} 
\prod_{i=4}^{k+1} \Bigl\|  u_i\Bigr\|_{X^{1-,\frac{1}{2}+}} 
\end{eqnarray*}
since $ N^{1+s+1/6} N_1^{-1} N_3^{1/2} N_2^{\frac{1}{6}} N_1^{0+} \lesssim 
N_1^{-\varepsilon}  N_1^s N_2^{\frac{5}{12}+\varepsilon} N_3^{\frac{5}{12}+\varepsilon} $
 for $ 0<\varepsilon\ll 1$.\\
${\bf 2.2.2.}$   $ N_2 \ll N $. By duality it suffices to prove that for any $ v  \in X^{0, \frac{1}{2}-} $
 it holds 
 $$
 J_k:= N^{1+s} \Bigl| \int_{\R^3}  \Bigl( \prod_{i=1}^{k+1} P_{N_i} u_i\Bigr) P_N v \Bigr| \lesssim 
   N_1^{0-}    \|v\|_{X^{0,\frac{1}{2}-}}\|u_1 \|_{X^{s,\frac{1}{2}+}} 
 \prod_{i=2}^3 \| u_2 \|_{X^{0+,\frac{1}{2}+}} 
\prod_{i=4}^{k+1} \Bigl\|  u_i\Bigr\|_{X^{1-,\frac{1}{2}+}} 
$$
We write 
$$
J_k \lesssim N^{1+s}  \| P_{N_1} u_1 P_{N_3} u_3\|_{L^2}  \| P_{N} v P_{N_2} u_2\|_{L^2}
\| P_{\lesssim k N_1}\Bigl( \prod_{i=4}^{k+1} P_{N_i} u_i\Bigr) \|_{L^\infty_{txy}}
$$
and  use two times \eqref{bi1} to get 
\begin{eqnarray*}
J_k &\lesssim &  N^{1+s} N_1^{-1+} N^{-1+} N_2^{\frac{1}{2}+} N_3^{\frac{1}{2}+} 
\|P_N v\|_{X^{0,\frac{1}{2}-}} \prod_{i=1}^3 \|P_{N_i}  u_i\|_{X^{0,\frac{1}{2}-}} 
\|P_{\lesssim k N_1} \Bigl( \prod_{i=4}^{k+1} P_{N_i} u_i \Bigr)\|_{L^\infty_{txy}}\\
& \lesssim & N_1^{0-}    \|v\|_{X^{0,\frac{1}{2}-}}\|u_1 \|_{X^{s,\frac{1}{2}+}} 
 \prod_{i=2}^3 \| u_i \|_{X^{0+,\frac{1}{2}+}} 
  N_1^{0-}\| P_{\lesssim k N_1} \Bigl( \prod_{i=4}^{k+1} P_{N_i} u_i \Bigr) \|_{L^\infty_{txy}}\\
&  \lesssim &
   N_1^{0-}    \|v\|_{X^{0,\frac{1}{2}-}}\|u_1 \|_{X^{s,\frac{1}{2}+}} 
 \prod_{i=2}^3 \| u_2 \|_{X^{0+,\frac{1}{2}+}} 
\prod_{i=4}^{k+1} \Bigl\|  u_i\Bigr\|_{X^{1-,\frac{1}{2}+}} 
\end{eqnarray*}
that proves the desired result.
\end{proof}
\subsection{Proof of the LWP}
Let us now recall the  following well-known estimates for
Bourgain's spaces (see for instance Ginibre \cite{G96})

\begin{lemma}[Homogeneous linear estimate] \label{prop1.1}
Let $s \in \mathbb R$ and $b>\frac12$. Then
\begin{equation} \label{prop1.1.2}
\|\eta(t) e^{-t\partial_x\Delta} f\|_{X^{s,b}} \lesssim\|f\|_{H^s} \ .
\end{equation}
\end{lemma}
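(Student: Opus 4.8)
The plan is to prove the estimate by a direct computation on the Fourier side, exploiting the fact that $\eta$ is a \emph{fixed} Schwartz function so that every time-frequency weight it produces is integrable. First I would compute the full space-time Fourier transform of $u(t,x,y):=\eta(t)\,e^{-t\partial_x\Delta}f$. Using the definition \eqref{V} of the free group, the partial Fourier transform in the spatial variables is
\begin{equation*}
\mathcal{F}_{xy}(u)(\xi,q,t)=\eta(t)\,e^{itw(\xi,q)}\,\mathcal{F}_{xy}(f)(\xi,q),
\end{equation*}
and taking the Fourier transform in $t$ converts the modulation $e^{itw(\xi,q)}$ into a translation of the time frequency, yielding
\begin{equation*}
\widehat{u}(\xi,q,\tau)=\widehat{\eta}\big(\tau-w(\xi,q)\big)\,\mathcal{F}_{xy}(f)(\xi,q),
\end{equation*}
where $\widehat{\eta}=\mathcal{F}_t\eta$.

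Next I would insert this expression into the definition \eqref{Bourgain} of the $X^{s,b}$ norm. The key observation is that the spatial weight $\langle|(\xi,q)|\rangle^{2s}$ and the factor $|\mathcal{F}_{xy}(f)(\xi,q)|^2$ are independent of $\tau$, so that after the change of variables $\sigma=\tau-w(\xi,q)$ in the inner integral the modulation weight decouples completely from the spatial frequencies:
\begin{equation*}
\|u\|_{X^{s,b}}^2=\left(\int_{\R}\langle\sigma\rangle^{2b}\,|\widehat{\eta}(\sigma)|^2\,d\sigma\right)\left(\sum_{q\in\Z}\int_{\R}\langle|(\xi,q)|\rangle^{2s}\,|\mathcal{F}_{xy}(f)(\xi,q)|^2\,d\xi\right).
\end{equation*}
The second factor is precisely $\|f\|_{H^s}^2$, while the first is the squared one-dimensional time-Sobolev norm $\|\eta\|_{H^b(\R_t)}^2$.

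Finally I would observe that, since $\eta\in C_0^{\infty}(\R)$, its Fourier transform $\widehat{\eta}$ is Schwartz, and hence $\int_{\R}\langle\sigma\rangle^{2b}|\widehat{\eta}(\sigma)|^2\,d\sigma<\infty$ for \emph{every} $b\in\R$. This finite quantity depends only on $b$ and on the fixed cutoff $\eta$, and it is exactly the implicit constant in the claimed inequality. There is essentially no analytic obstacle in this argument: the only point requiring care is the decoupling of the $\tau$-integral from the spatial sum-and-integral, which works precisely because applying the free group shifts the time frequency by the dispersion $w(\xi,q)$ without altering the modulus of the spatial profile $\mathcal{F}_{xy}(f)$. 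In particular the hypothesis $b>\tfrac12$ plays no role in this specific estimate (it is needed elsewhere, for the embedding $X^{s,b}\hookrightarrow C(\R;H^s)$ and for the inhomogeneous Duhamel estimate), so the bound in fact holds for all real $b$.
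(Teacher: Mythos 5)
Your proof is correct and is exactly the standard argument: the paper does not prove this lemma but cites it as well known (referring to Ginibre \cite{G96}), where the same Fourier-side computation appears. The only cosmetic point is that with the paper's convention $|(\xi,q)|=\sqrt{3\xi^2+q^2}$ the spatial factor is equivalent to, rather than literally equal to, $\|f\|_{H^s}^2$, which is harmlessly absorbed into the implicit constant; your remark that $b>\tfrac12$ is not needed for this particular bound is also accurate.
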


\begin{lemma}[Non-homogeneous linear estimate] \label{prop1.2}
Let $s \in \mathbb R$. Then for any $0<\delta<\frac12$,
\begin{equation} \label{prop1.2.1}
\big\|\eta(t)\int_0^t e^{-(t-t')\partial_x\Delta}g(t')dt'\big\|_{X^{s,\frac12+\delta}} \lesssim  \|g\|_{X^{s,-\frac12+\delta}} \ .
\end{equation}
\end{lemma}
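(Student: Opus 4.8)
The plan is to reduce the full space-time estimate to a purely one-dimensional estimate in the time variable, exploiting that the Duhamel operator is diagonal in the spatial frequency $(\xi,q)$ and that the dispersion relation $w=w(\xi,q)=\xi^3+\xi q^2$ enters \eqref{Bourgain} only through the modulation weight $\langle\tau-w\rangle$. For fixed $(\xi,q)$, set
$$u(t):=\eta(t)\int_0^t e^{-(t-t')\partial_x\Delta}g(t')\,dt',\qquad G_{\xi,q}(t):=e^{-itw}\,\mathcal F_{xy}(g)(\xi,q,t).$$
Using \eqref{V} to compute $\mathcal F_{xy}(u)(\xi,q,t)=\eta(t)\int_0^t e^{i(t-t')w}\mathcal F_{xy}(g)(\xi,q,t')\,dt'$ and gauging away the phase, the profile $f_{\xi,q}(t):=e^{-itw}\mathcal F_{xy}(u)(\xi,q,t)$ is exactly $\eta(t)\int_0^t G_{\xi,q}(t')\,dt'$. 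Since $\mathcal F_t(e^{-itw}h)(\tau)=\widehat h(\tau+w)$, the weight $\langle\tau-w\rangle^{b}$ in \eqref{Bourgain} becomes a plain $\langle\tau\rangle^{b}$ acting on the gauged profiles, so by Plancherel in $(\xi,q)$
$$\|u\|_{X^{s,\frac12+\delta}}^2=\sum_{q\in\Z}\int_\R\langle|(\xi,q)|\rangle^{2s}\Big\|\eta(t)\!\int_0^t G_{\xi,q}(t')\,dt'\Big\|_{H^{\frac12+\delta}_t}^2 d\xi,\quad \|g\|_{X^{s,-\frac12+\delta}}^2=\sum_{q\in\Z}\int_\R\langle|(\xi,q)|\rangle^{2s}\|G_{\xi,q}\|_{H^{-\frac12+\delta}_t}^2 d\xi.$$
Because the scalar constant below does not depend on $(\xi,q)$, the estimate \eqref{prop1.2.1} follows by multiplying by $\langle|(\xi,q)|\rangle^{2s}$ and summing/integrating, once I establish
$$\Big\|\eta(t)\int_0^t G(t')\,dt'\Big\|_{H^{\frac12+\delta}_t}\lesssim\|G\|_{H^{-\frac12+\delta}_t},\qquad G\in H^{-\frac12+\delta}(\R).$$

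For this scalar bound I would argue on the time-Fourier side. Writing $G(t')=\int_\R e^{it'\tau}\widehat G(\tau)\,d\tau$ and integrating yields $\int_0^t G(t')\,dt'=\int_\R \frac{e^{it\tau}-1}{i\tau}\widehat G(\tau)\,d\tau$, and the standard device is to split the multiplier $\frac{e^{it\tau}-1}{i\tau}$ according to $|\tau|\le1$ and $|\tau|>1$. On the region $|\tau|\le1$ I Taylor expand $e^{it\tau}-1=\sum_{k\ge1}\frac{(it\tau)^k}{k!}$, producing a rapidly convergent series whose terms are $\frac{(i\tau)^{k-1}}{k!}\,\eta(t)t^k$; each $\eta(t)t^k$ is a fixed Schwartz function, and since $\langle\tau\rangle^{k-1}\sim1$ here this contribution is bounded in $H^{\frac12+\delta}_t$ by $\|\widehat G\|_{L^2(|\tau|\le1)}\lesssim\|G\|_{H^{-\frac12+\delta}}$. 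On the region $|\tau|>1$ I split $\frac{e^{it\tau}-1}{i\tau}=\frac{e^{it\tau}}{i\tau}-\frac1{i\tau}$; after multiplication by $\eta$ and a time-Fourier transform, the first term becomes the convolution-type operator $\sigma\mapsto\int_{|\tau|>1}\widehat\eta(\sigma-\tau)\frac1{i\tau}\widehat G(\tau)\,d\tau$ with Schwartz kernel $\widehat\eta$, while the second produces the rank-one term $\widehat\eta(\sigma)\int_{|\tau|>1}\frac{\widehat G(\tau)}{i\tau}\,d\tau$.

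The crux, and the step I expect to be most delicate, is the high-modulation convolution term. Here I must pass the output weight $\langle\sigma\rangle^{\frac12+\delta}$ through the convolution using $\langle\sigma\rangle^{\frac12+\delta}\lesssim\langle\sigma-\tau\rangle^{\frac12+\delta}\langle\tau\rangle^{\frac12+\delta}$: the factor $\langle\sigma-\tau\rangle^{\frac12+\delta}$ is absorbed by the rapid decay of $\widehat\eta$, leaving an integrable Schwartz kernel, while the factor $\langle\tau\rangle^{\frac12+\delta}$ combines with the gain $|\tau|^{-1}$ from the integration in time to reconstitute exactly $\langle\tau\rangle^{-\frac12+\delta}\widehat G(\tau)$. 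The balance is tight --- the single derivative gained from $\int_0^t$ is precisely what upgrades the index $-\frac12+\delta$ to $\frac12+\delta$ with nothing to spare --- which is why it is essential that $\eta$ be a fixed Schwartz cutoff, so that $\widehat\eta$ decays fast enough to absorb the transferred weight. A Schur test (or Young's inequality) then bounds this term by $\|\langle\tau\rangle^{-\frac12+\delta}\widehat G\|_{L^2}=\|G\|_{H^{-\frac12+\delta}}$, and the rank-one term is controlled in the same way, more easily, using Cauchy--Schwarz in $\tau$ (where $\delta>0$ guarantees integrability of $\langle\tau\rangle^{1-2\delta}\tau^{-2}$ on $|\tau|>1$) together with $\widehat\eta\in\mathcal S$. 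This completes the scalar estimate and hence \eqref{prop1.2.1}; as this is the classical Bourgain-space Duhamel estimate, one may alternatively invoke it directly from \cite{G96}.
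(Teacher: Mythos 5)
The paper does not actually prove this lemma: it is stated as a well-known estimate with a pointer to Ginibre \cite{G96}, and no argument is given. Your proof --- gauging away the phase $e^{itw(\xi,q)}$ to reduce matters to the scalar bound $\big\|\eta(t)\int_0^t G(t')\,dt'\big\|_{H^{\frac12+\delta}_t}\lesssim\|G\|_{H^{-\frac12+\delta}_t}$, and then splitting the multiplier $(e^{it\tau}-1)/(i\tau)$ into the Taylor-expanded low-modulation piece and the two high-modulation pieces --- is exactly the classical argument from that reference, and it is correct, including the delicate borderline balancing of $\langle\tau\rangle^{\frac12+\delta}|\tau|^{-1}\sim\langle\tau\rangle^{-\frac12+\delta}$ and the use of $\delta>0$ for the rank-one term.
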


\begin{lemma} \label{prop1.3b}
For any $T>0$, $s \in \mathbb R$ and for all $-\frac12< b' \le b
<\frac12$, it holds
\begin{equation} \label{prop1.3b.1}
\|u\|_{X^{s,b'}_T} \lesssim T^{b-b'}\|u\|_{X^{s,b}_T}.
\end{equation}
\end{lemma}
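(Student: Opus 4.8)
The plan is to prove Lemma~\ref{prop1.3b} by reducing the localized estimate to a statement about a single function on all of $\R^3$ and then to a purely one-dimensional (in the time-frequency variable $\tau$) multiplier estimate. First I would unwind the definition of the restriction norm: given $u$ on $\R\times\T\times[0,T]$, choose an arbitrary extension $\tilde u$ to $\R\times\T\times\R$ with $\tilde u|_{[0,T]}=u$. Since the spatial variables $(x,y)$ and the spatial-frequency weight $\langle|(\xi,q)|\rangle^{s}$ play no role whatsoever in this estimate—the claimed inequality is about trading a power of $T$ against a drop in the $b$-exponent, which only touches the modulation weight $\langle\tau-w(\xi,q)\rangle$—I would fix the frequency variables $(\xi,q)$ and work fiber-by-fiber. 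Concretely, by Plancherel in $(x,y)$ it suffices to prove, for each fixed $(\xi,q)$, the one-variable estimate
\begin{equation}
\Big\|\langle\tau-w(\xi,q)\rangle^{b'}\,\widehat{f_T}(\tau)\Big\|_{L^2_\tau}
\lesssim T^{b-b'}\,\Big\|\langle\tau-w(\xi,q)\rangle^{b}\,\widehat{f}(\tau)\Big\|_{L^2_\tau},
\end{equation}
where $f$ is the relevant time-profile and $f_T$ its truncation to $[0,T]$, and then to sum/integrate over $(\xi,q)$ and take the infimum over extensions.

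Next I would remove the phase $w(\xi,q)$ by the standard change of unknown. Writing $v(t):=e^{t\partial_x\Delta}u$ (equivalently conjugating by the free group), the weight $\langle\tau-w(\xi,q)\rangle$ becomes simply $\langle\sigma\rangle$ in the shifted time-frequency variable, so the $X^{s,b}$ norm of $u$ equals the $H^b_t H^s_{xy}$ norm of $v$ and the whole statement collapses to the classical inequality
\begin{equation}
\|\chi_{[0,T]}\,v\|_{H^{b'}_t}\lesssim T^{b-b'}\,\|v\|_{H^{b}_t}
\end{equation}
for $-\tfrac12<b'\le b<\tfrac12$, uniformly in the (now irrelevant) spatial variables, which I would then integrate in $(x,y)$ against the weight $\langle|(\xi,q)|\rangle^{2s}$. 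Thus the proof reduces entirely to the one-dimensional time estimate for Sobolev spaces on the line.

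For that one-dimensional estimate I would invoke the well-known result (see Ginibre~\cite{G96} or Tao's treatment of $X^{s,b}$ spaces) that for $-\tfrac12<b'\le b<\tfrac12$ multiplication by the sharp cutoff $\chi_{[0,T]}$ maps $H^b_t$ to $H^{b'}_t$ with operator norm $\lesssim T^{b-b'}$. To keep the note self-contained I would sketch the two endpoint mechanisms behind it: when $b'=b$ one uses that $\chi_{[0,T]}$ is bounded on $H^{b}_t$ uniformly in $T$ for $|b|<\tfrac12$ (this is exactly where the restriction $b<\tfrac12$ enters, since $\chi_{[0,T]}$ fails to be an $H^{1/2}$-multiplier), giving the trivial bound with $T^{0}$; and when one genuinely lowers the index one interpolates this against the crude gain obtained for very small $T$. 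The cleanest route is to first prove the two special cases $b'=b$ and $b'=-b$ (for $0\le b<\tfrac12$) and then fill in the intermediate exponents by complex interpolation in the pair $(b',b)$. Finally I would lift the fixed-$(\xi,q)$ estimate back up: multiplying by $\langle|(\xi,q)|\rangle^{2s}$, summing over $q\in\Z$ and integrating over $\xi\in\R$ preserves the constant since the bound is uniform in $(\xi,q)$, and taking the infimum over all extensions $\tilde u$ of $u$ converts the global $X^{s,b}$ norm into the localized $X^{s,b}_T$ norm, yielding \eqref{prop1.3b.1}.

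The main obstacle is the sharp-cutoff multiplier bound on $H^b_t$ for $|b|<\tfrac12$: one must be careful that the operator norm of multiplication by $\chi_{[0,T]}$ is genuinely controlled by $T^{b-b'}$ and does not hide a logarithmic or $b$-dependent blow-up as $b\to\tfrac12^-$, and that the constants are uniform in the frozen spatial frequency so that the reassembly in $(\xi,q)$ is legitimate. Since this is a standard and well-documented property of Bourgain spaces, I expect to quote it from \cite{G96} rather than reprove it in detail; the only genuinely new bookkeeping is checking that freezing $(\xi,q)$ and conjugating by the free group reduces the waveguide statement exactly to the known line estimate, which is immediate because neither $T$ nor the cutoff interacts with the spatial variables.
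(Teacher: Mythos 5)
Your proposal is correct and is in line with the paper's treatment: the paper states Lemma \ref{prop1.3b} without proof, simply citing Ginibre \cite{G96}, and your reduction---conjugating by the free group $e^{-t\partial_x\Delta}$, freezing the spatial frequencies $(\xi,q)$, and invoking the standard one-dimensional bound $\|\chi_{[0,T]}v\|_{H^{b'}_t}\lesssim T^{b-b'}\|v\|_{H^b_t}$ for $-\tfrac12<b'\le b<\tfrac12$ (proved via the cases $b'=b$ and $b'=-b$ plus interpolation/duality, and combined with the infimum over extensions since $\chi_{[0,T]}\tilde u$ is itself an extension of $u$)---is exactly the classical argument behind that citation. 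Your one caveat is harmless: the implicit constant does depend on $b,b'$ and degenerates as $b\to\tfrac12^-$ or $b'\to-\tfrac12^+$, but the lemma's $\lesssim$ permits this.
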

Combining Proposition \ref{progZK} with the above lemmas we obtain
   that for any $ 0<T<1 $ small enough, the functional  
  $$
  {\mathcal G}_T(w)(t,\cdot):=\psi(t)  e^{-t\partial_x\Delta} u_0 -\frac{1}{2} \int_0^t e^{-(t-t')\partial_x\Delta} \partial_x (\psi(\cdot/T) w)^{k+1}(t',\cdot)\, dt' 
  $$
  maps $X^{s,\frac{1}{2}+} $ into itself and is strictly contractive. This yields to the local well-posedness result and we refer the reader to \cite{ST01} for the details. 

%\subsection{Proof of the GWP}
%The global existence in $ H^s(\R\times\T) $, $ s\ge 1$,  of the solutions to \eqref{ZKk} for small enough initial data in $ H^1(\R\times\T) $ is a straightforward consequence of the conservation laws \eqref{M} and \eqref{H}. More precisely, the classical Gagliardo-Niremberg inequality ensures that there exists $ C>0 $ such that for any $ k\ge 2 $ and any $u\in H^1(\R\times\T) $, 
%\begin{equation}\label{eqQ}
%\|u\|_{L^{k+2}_{xy}}^{k+2} \le C_0 \, \|u\|_{H^1}^k  \|u\|_{L^2_{xy}}^2 \; .
%\end{equation}
%Combining \eqref{M}-\eqref{eqQ} and Theorem \ref{LWP}, we infer that for  any  solution $ u \in C([0,T];H^1) $ of \eqref{ZKk} contd in Theorem  \ref{LWP}, the quantity  $ X(t)=\|u(t)\|_{H^1}^2 $ satisfies
%\begin{align*} 
%X(t)&  \le M(u_0) +2 E(u_0) + \frac{2C_0}{k+2} M(u_0) X(t)^{\frac{k}{2}} \\
%& \le C(\|u_0\|_{H^1})  + \frac{2C_0}{k+2} \|u_0\|_{L^2_{xy}}^2 X(t)^{\frac{k}{2}}, \quad \forall t\in [0,T] \; .
%\end{align*}
%This  forces  $t\mapsto  \|u(t)\|_{H^1}  $ to be bounded on $ [0,T] $ for $\|u_0\|_{H^1} $ small  enough and leads to the following Corollary \ref{CorGWP}. 

\section{Global well-posedness in the energy space}

The global existence in $ H^1(\R\times\T) $ of the solutions to \eqref{ZKk} for small enough initial data is a straightforward consequence of the conservation laws \eqref{M}-\eqref{H} and a suitable Gagliardo-Niremberg type inequality. Next, we prove Lemma \ref{LemGN} and use it to deduce the global well-posedness result stated in Theorem \ref{global1}.

\subsection{Gagliardo Nirenberg type inequality}

\begin{proof}[Proof of Lemma \ref{LemGN}]
We follow the ideas of Hebey and Vaugon \cite{HV95}. Let $\eta_1, \eta_2 \in C^{\infty}(\mathbb{T})$ non negative functions such that
$$
\eta^2_1+ \eta^2_2=1, \quad \supp\,\eta_1\cap[0,1]\subset [1/4,3/4] \quad \,\mbox{ and }\,\quad \eta_1\equiv 1 \,\mbox{ on }\, [1/3,2/3].
$$
We have
\begin{equation}\label{feta}
\|f\|^{k+2}_{k+2}=\left\|f^2\right\|^{(k+2)/2}_{(k+2)/2}=\left\|\sum_{j=1}^2 \eta^2_j  f^2\right\|^{(k+2)/2}_{(k+2)/2}\leq \left(\sum_{j=1}^2 \left\| \eta_j  f\right\|^2_{k+2}\right)^{^{(k+2)/2}}.
\end{equation}
Since $\supp\,\eta_1\cap[0,1]\subset [1/4,3/4]$ and $\supp\,\eta_2\cap[1/2,3/2]\subset [2/3,4/3]$, we deduce for all $V\in H^1(\mathbb{R}\times\mathbb{T})$ that 
$$
\left\|\eta_1(y)V\right \|_{L^{k+2}(\mathbb{R}\times\mathbb{T})}^{k+2}=\left\|\eta_1(y)V\right \|_{L^{k+2}(\mathbb{R}\times(0,1))}^{k+2}\leq C_{k,\mathbb{R}}\left\|\eta_1(y)V\right\|^2_2 \left\|\nabla (\eta_1(y)V)\right\|_2^{k}
$$
and an analogous estimate for $\eta_2$.

Inserting the previous inequality in \eqref{feta} leads to
$$
\|f\|^{k+2}_{k+2}\leq C_{k,\mathbb{R}} \left(\sum_{j=1}^2 \left\|\eta_j(y)f\right\|^{4/(k+2)}_2 \left\|\nabla (\eta_j(y)f)\right\|_2^{2k/(k+2)}\right)^{^{(k+2)/2}}.
$$
Now, observe that
$$
\left\|\eta_j(y)f\right\|^{4/(k+2)}_2=\left( \int \eta^2_j f^2 dxdy\right)^{2/(k+2)}
$$
and
\begin{eqnarray}
\left\|\nabla (\eta_j(y)f)\right\|_2^{2k/(k+2)}&=&\left(\int\left(\eta^2_j|\nabla f|^2+2\eta_j\partial_y \eta_jf\partial_yf+(\partial_y\eta_j)^2f^2\right)dxdy\right)^{k/(k+2)}\\
&=&\left(\int\left(\eta^2_j|\nabla f|^2+\frac{1}{2}\partial_y(\eta^2_j) \partial_y(f^2)+(\partial_y\eta_j)^2f^2\right)dxdy\right)^{k/(k+2)}\\
&=&\left(\int\left(\eta^2_j|\nabla f|^2+H_jf^2\right)dxdy\right)^{k/(k+2)},
\end{eqnarray}
where we have used integration by parts in the last step and $H_j=-\frac{1}{2}\partial^2_y(\eta^2_j) +(\partial_y\eta_j)^2$.

%\begin{rem}
%{\color{red} Is it true that $H_j \in L^{\infty}$?}
%\end{rem}

Collecting the last two identities we obtain
$$
\|f\|^{k+2}_{k+2}\leq C_{k,\mathbb{R}} \left(\sum_{j=1}^2  \left( \int \eta^2_j f^2 dxdy\right)^{2/(k+2)} \left(\int\left(\eta^2_j|\nabla f|^2+H_jf^2\right)dxdy\right)^{k/(k+2)}\right)^{^{(k+2)/2}}.
$$
Using, by Holder inequality, that for any couples $(a_1,b_1)$ and $(a_2,b_2)$ in $\mathbb{R}_+\times \mathbb{R}_+$
$$
\sum_{j=1}^2a_j^{2/(k+2)}b_j^{k/(k+2)}\leq \left(\sum_{j=1}^2a_j\right)^{^{2/(k+2)}}\left(\sum_{j=1}^2b_j\right)^{^{k/(k+2)}}
$$
we finally get
\begin{eqnarray}
\|f\|^{k+2}_{k+2}&\leq & C_{k,\mathbb{R}}\left( \sum_{j=1}^2 \int \eta^2_j f^2 dxdy\right)\left(\int\sum_{j=1}^2\left(\eta^2_j|\nabla f|^2+H_jf^2\right)dxdy\right)^{k/2}\\
&=&C_{k,\mathbb{R}}\|f\|^2_2 \left(\|\nabla f\|_2^2+\int\sum_{j=1}^2 H_jf^2dxdy\right)^{k/2}\\
&\leq &C_{k,\mathbb{R}}\|f\|^2_2 \left(\|\nabla f\|_2^2+c\|f\|^2_2\right)^{k/2},
\end{eqnarray}
where we have used that $H_j \in L^{\infty}$ in the last step, which completes the proof.
\end{proof}

\subsection{Proof of the GWP}

We are now in position to prove our global well-posedness result. 

\begin{proof}[Proof of Theorem \ref{global1}]
From the sharp Gagliardo Nirenberg type inequality \eqref{SGN} with $k=2$, we have
$$
\|\nabla u(t)\|_2^2\leq 2H(u_0)+\frac{C_{2,\mathbb{R}}}{2}\|u_0\|_2^2\left(\|\nabla u(t)\|_2^2+C_{2,\mathbb{T}}\|u_0\|_2^2\right)
$$
and then, from \eqref{opt1}, we get
$$
\|\nabla u(t)\|_2^2\left(1-\frac{\|u_0\|^2_2}{\|Q_2\|^2_2}\right)\leq 2H(u_0)+\frac{C_{2,\mathbb{T}}\|u_0\|_2^4}{\|Q_2\|^2_2}.
$$
So, assuming \eqref{GR0}, we deduce that $\|\nabla u(t)\|_2$ is uniformly bounded in time, which implies the desired result in $(i)$ in view of the local theory Theorem \ref{LWP}.

Next, we turn our attention to second part of the theorem. Again, the sharp Gagliardo Nirenberg type inequality \eqref{SGN}  yields
$$
\|\nabla u(t)\|_2^2\leq 2H(u_0)+\frac{2C_{k,\mathbb{R}}}{k+2}\|u_0\|_2^2\left(\|\nabla u(t)\|_2^2+C_{k,\mathbb{T}}\|u_0\|_2^2\right)^{k/2}
$$
Defining $X(t)=\|\nabla u(t)\|_2^2+C_{k,\mathbb{T}}\|u_0\|_{2}^2$, $A_k=2H(u_0)+C_{k,\mathbb{T}}\|u_0\|_{2}^2$ and $B_k=2C_{k,\mathbb{R}}\|u_0\|_2^2/(k+2)$, the above inequality implies for every existence time $t$ that
$$
X(t)-B_kX(t)^{k/2}\leq A_k.
$$
Let $f(x)=x-B_kx^{k/2}$, it is easy to see that this function has an absolute maximum at $x_0=(2/kB_k)^{2/(k-2)}$ with maximum value $f(x_0)=(2/kB_k)^{2/(k-2)}(k-2)/2$. A continuity argument guarantees that, if 
\begin{equation}\label{relglobal}
A_k<f(x_0) \quad \mbox{and} \quad X(0)<x_0,
\end{equation}
then $X(t)<x_0$ for all existence time $t>0$ and thus the solution is global. From identities \eqref{opt1} and  \eqref{rel2}, we deduce that \eqref{GR1} and \eqref{GR2} imply the above inequalities, completing the proof.
%\begin{eqnarray}
%A<f(x_0)&\Longleftrightarrow & 2H(u_0)+C_{k,\mathbb{T}}\|u_0\|_{2}^2<\frac{k-2}{4}\left(\frac{\|Q_k\|_{2}^{2}}{\|u_0\|_{2}^{4/k}}\right)^{\frac{k}{k-2}}\\
%&\Longleftrightarrow & \left(2H(u_0)+C_{k,\mathbb{T}}\|u_0\|_{2}^2\right)^{\frac{k-2}{k}}\|u_0\|_{2}^{\frac{4}{k}}<\left(\frac{k-2}{4}\right)^{\frac{k-2}{k}}\|Q_k\|_{2}^{2}\\
%&\Longleftrightarrow &\eqref{GR1}.
%\end{eqnarray}
%Moreover, using the same identities, we also have
%\begin{eqnarray}
%X(0)<x_0&\Longleftrightarrow & \|\nabla u_0\|^2_{2}+C_{k,\mathbb{T}}\|u_0\|_{2}^2<\frac{k}{2}\left(\frac{\|Q_k\|_{2}}{\|u_0\|_{2}^{2/k}}\right)^{\frac{2k}{k-2}}\\
%&\Longleftrightarrow & \left(\|\nabla u_0\|^2_{2}+C_{k,\mathbb{T}}\|u_0\|_{2}^2\right)^{\frac{k-2}{2k}}\|u_0\|_{2}^{\frac{2}{k}}<\left(\frac{k}{2}\right)^{\frac{k-2}{2k}}\|Q_k\|_{2}\\
%&\Longleftrightarrow &\eqref{GR2},
%\end{eqnarray}
%completing the proof.
\end{proof}

\vspace{0.5cm}
\noindent 
\textbf{Acknowledgments.} L.M. would like to thank Emmanuel Humbert  for his explanations on  the proof of the main result  in \cite{HV95}. L.G.F. was partially supported by Coordena\c{c}\~ao de Aperfei\c{c}oamento de Pessoal de N\'ivel Superior - CAPES, Conselho Nacional de Desenvolvimento Cient\'ifico e Tecnol\'ogico - CNPq and Funda\c{c}\~ao de Amparo a Pesquisa do Estado de Minas Gerais - Fapemig/Brazil. %The authors would like to thank the referees for their valuable comments which helped to improve the manuscript.

%\bibliography{biblio2}
%\bibliographystyle{abbrvnat}

\newcommand{\Addresses}{{% additional braces for segregating \footnotesize
		\bigskip
		\footnotesize
		
		LUC MOLINET, \textsc{Universit\'e de Tours, Universit\'e d'Orl\'eans,
CNRS, France}\par\nopagebreak
		\textit{E-mail address:} \texttt{Luc.Molinet@lmpt.univ-tours.fr}
		
		\medskip
		
		LUIZ G. FARAH, \textsc{Department of Mathematics, UFMG, Brazil}\par\nopagebreak
		\textit{E-mail address:} \texttt{farah@mat.ufmg.br}

}}
\setlength{\parskip}{0pt}
\Addresses

\end{document}